\newtheorem{theorem}{Theorem}[section]
\newtheorem{lemma}{Lemma}[section]
\newtheorem{remark}{Remark}[section]
\newtheorem{example}{Example}[section]
\newtheorem{assumption}{Assumption}[section]
\numberwithin{equation}{section}
\newcommand{\dd}{\,{\rm d}}
\begin{document}
	
\begin{frontmatter}
	\title{A Parareal exponential integrator finite element method for linear parabolic equations}%% \tnoteref{mytitlenote}}
	\author[sjtu]{Jianguo Huang\fnref{hjgfootnote}}
	\ead{jghuang@sjtu.edu.cn}
	\address[sjtu]{School of Mathematical Sciences, and MOE-LSC, Shanghai Jiao Tong University, Shanghai 200240, China}
	%\cortext[mycorrespondingauthor]{Corresponding author}
	%\fntext[hjgfootnote]{J. Huang's research was partially supported by National Natural Science Foundation of China under grant number  11571237.}
	\author[psu]{Yuejin Xu\fnref{xyjfootnote}}
	\ead{ymx5204@psu.edu}
	\address[psu]{Department of mathematics, the Pennsylvania State University, PA 16802, USA}
	\begin{abstract}
		In this paper, for solving a class of linear parabolic equations in rectangular domains, we have proposed an efficient Parareal exponential integrator finite element method. The proposed method first uses the finite element approximation with continuous multilinear rectangular basis function for spatial discretization, and then takes the Runge-Kutta approach accompanied with Parareal framework for time integration of the resulting semi-discrete system to produce parallel-in-time numerical solution. Under certain regularity assumptions, fully-discrete error estimates in $L^2$-norm are derived for the proposed schemes with random interpolation nodes. Moreover, a fast solver can be provided based on tensor product spectral decomposition and fast Fourier transform (FFT), since the mass and coefficient matrices of the proposed method can be simultaneously diagonalized with an orthogonal matrix. A series of numerical experiments in various dimensions are also presented to validate the theoretical results and demonstrate the excellent performance of the proposed method.
		
	\end{abstract}
	\begin{keyword}
		Parabolic equations, exponential integrator finite element method, Parareal method, fast Fourier transform, Runge-Kutta, error bound
	\end{keyword}		
\end{frontmatter}	
	
\section{Introduction}
In this paper, we are intended to study a fast solution of the following linear parabolic equation:
\begin{equation}
\label{eq1-1}
\left\{\begin{split}
&u_t = \mathcal{D}\Delta u +f(t), \quad \bm x \in \Omega, \quad t_0 \leq t \leq t_0+T,\\
&u(t_0):=u(t_0,\bm x)=u_0, \quad \bm x \in \Omega,
\end{split}
\right.
\end{equation}
where $\Omega$ is an open rectangular domain in $\mathbb{R}^d$ ($d\geq 1$), $T > 0$ is the duration time, $\mathcal{D}>0$ is the diffusion coefficient, and $f(t)$ is the reaction term for the above equation, which has been widely used in various scientific and engineering models. In this paper, we mainly discuss the problem above, which is associated with Dirichlet boundary conditions. The theoretical conclusion can also be extended to equations with periodic boundary conditions after minor modification.

In the past few decades, many researchers are devoted to developing fast solver for parabolic equation like \eqref{eq1-1}. Various methods have been applied to solving the numerical result for spatial and temporal discretization, respectively. Some common methods such as finite element method, finite difference method, finite volume method and spectral method are used for spatial discretization, while temporal discretization is mainly implemented by implicit-explicit method \cite{AscherRuuth1995}, fully-implicit scheme \cite{FePr2003}, split step method \cite{BoydJohn2001, SanzCalvo1994}, integrating factor method (IF) \cite{Lawson1967}, integrating factor Runge-Kutta method (IFRK) \cite{IGG2018, LLJF2021}, sliders method \cite{Driscoll2002},  exponential time differencing method (ETD) \cite{HochbruckOstermann2010,BorislavWill2005}, invariant energy quadratization method (IEQ) \cite{Yang2016}, scalar auxiliary variable method (SAV) \cite{ShXuYa2019} and so on. ETD method has received much attention in the past two decades since it approximates the nonlinear part of function $f$ using polynomial interpolations to obtain higher accuracy for highly oscillation problems \cite{ZhuJu2016}. Due to the great efficiency and stability of ETD method, the analysis of convergence \cite{HochbruckOstermann2005a,HochbruckOstermann2005b, DuZhu2004, KassamTrefethen2005, MohebbiDehghan2010} and modification \cite{KassamTrefethen2005, HochbruckOstermann2008, WhalenBrio2015, JuZhang2015, ZhuJu2016, HuangJu2019b,HuangJu2019a} has been widely proposed.

For temporal discretization, Parareal algorithm is a crucial method firstly proposed by Lions, Maday, Y. and Turinici in \cite{LionsMaday2001}, which solves equations in the parallel-in-time pattern to improve the calculating efficiency. Motivated by the two-level method, the whole time interval is divided into coarse and fine subintervals, attaching with coarse propagator $\mathcal{G}$ and fine propagator $\mathcal{F}$ respectively. The great advantage of Parareal method is that the numerical results obtained on coarse grids can reach to the accuracy of applying fine propagator squentially on fine grids with significant speedup. The convergence property of Parareal is discussed in \cite{GanderVandewalle2007, Guillaume2007} for scalar ODEs, showing that the algorithm is superlinear convergent on bounded time intervals, while that is linear convergent on unbounded intervals based on the convergence factor and the property of Banach spaces respectively. As for systems with complex eigenvalues, the convergence of Parareal and Parareal-Euler method are proposed in \cite{StaffRonquist2007, Wu2016} by deriving the corresponding stability function or convergence factor. What's more, the convergence property is discussed for some specific combinations of coarse propagator $\mathcal{G}$ and fine propagator $\mathcal{F}$ \cite{WuShi2009, WangWu2015, Wu2014}, which are mainly derived by constructing the convergence factors and plotting the stability regions. In recent years, with the development of Parareal method, many scholars have proposed some modified Parareal algorithm mainly in two aspects, decreasing the iteration times to speedup the convergence rate and improving the parallel efficiency for each iteration. Many researchers have tried to decrease the iteration times from different aspects, for instance, implicit time-integration schemes and Newton-like correction are used to accelerate the simulations in \cite{FarhatChandesris2003}. Some modified Parareal methods are proposed in \cite{GarridoLee2006} in the framework of two-level scheme, which regard the operators on coarse grid and fine grid as predictor and corrector respectively and can ensure to converge within a fixed number of iterations. Moreover, based on the relationship among Parareal, space-time multigrid and multiple shooting methods, some variants of Parareal are derived in \cite{GanderVandewalle2007} to accelerate the convergence. As for improving the efficiency of parallelism, multilevel extension can alleviate some of the load-balance issues of the two-level method \cite{GarridoLee2006}. A nonintrusive, optimal-scaling time-parallel method based on multigrid reduction (MGR) is also designed to obtain greater parallelism \cite{FalgoutFriedhoff2014}. To reduce the communication or synchronization among the processors, Paraexp method is proposed in \cite{GanderGuttel2013} to decouple the original problem to inhomogeneous and homogeneous subproblems, by untilizing an overlapping time-domain decomposition. Diagonalization is a crucial technique to reduce the computational cost in each iteration, which has been applied to solve the advection-diffusion equation with periodic boundary conditions in \cite{GanderLiu2020}, to develop the parallel coarse grid correction (CGC), and so on. 

As far as we know, for most numerical methods of PDEs, the spatial variables are often discretized by finite difference methods  \cite{JuZhang2015, ZhuJu2016, HuangJu2019a, HuangJu2019b} and the corresponding convergence analysis highly depends on discrete maximum principle \cite{JuLi2021} and references therein, which does not always hold for classical finite element or pseudo-spectral discretization. Moreover, most of Parareal convergence analysis is based on deriving the convergent factor, thus, the stability regions obtained are always approximate but not accurate when the convergent factor is complicated. In order to overcome the above obstacles, in this paper, for linear parabolic equation \eqref{eq1-1} in rectangular domains, we propose an efficient Parareal exponential integrator finite element method (abbreviated as PEIFE-linear) to solve large-scale problems in parallel-in-time pattern. Similar to \cite{HuangJu2022}, we first derive a semi-discrete scheme with continuous linear elements on a uniform rectangular partition and then obtain a fully-discrete scheme with explicit exponential Runge-Kutta scheme. Later, the whole time interval is divided into several subintervals and is applied with Parareal method in the temporal direction. Global error estimates measured in $L^2$-norm can be derived for PEIFE-linear method with random RK stages for the linear problem with Dirichlet boundary condition. It's worthy to note that the error estimate of PEIFE-linear method can be derived explicitly, by following the similar arguments of \cite{HochbruckOstermann2010} and \cite{GanderVandewalle2007, Guillaume2007}. Furthermore, motivated by \cite{JuZhang2015}, the resulting mass and coefficient matrices in PEIFE-linear method can be diagonalized simultaneously by the means of FFT, which leads to efficient implementation of spatial discretization for both Dirichlet and periodic boundary condition cases. The innovation of this paper is the development of parallel numerical method with rigorous global error estimates for general linear parabolic equations, which is proved to be unconditionally stable within finite iterations.

The rest of the paper is organized as follows. For solving parabolic equations \eqref{eq1-1} efficiently, we first propose an efficient and stable method named EIFE-linear and its parallel-in-time pattern method named PEIFE-linear in Section \ref{algorithm-description}. Next, we develop the global error analysis of EIFE-linear method in Section \ref{linear_theory}. Later, under certain assumptions of coarse propagator $\mathcal{G}$ and fine propagator $\mathcal{F}$, we develop the error estimate of Parareal method and derive that the convergence analysis of PEIFE-linear method can turn to the discussion of EIFE-linear method in Section \ref{PFEM_theory}. In Section \ref{numerical}, various numerical experiments are carried out to validate the theoretical results and demonstrate the excellent performance of the PEIFE-linear method. Finally, some remarks and discussion are drawn in Section \ref{conclusion}.

\section{The EIFE-linear method and PEIFE-linear method}
\label{algorithm-description}

In this section, for solving \eqref{eq1-1} with Dirichlet boundary conditions, we first propose the exponential integrator finite element method (EIFE-linear) for linear parabolic equations. Later, we apply Parareal framework to accelerate the simulation of EIFE-linear method and then develop the PEIFE-linear method.

First of all, we define some standard notations for later requirement. Given a bounded domain $G\subset \mathbb{R}^{d}$
and a non-negative integer $s$, denote the standard Sobolev spaces on $G$ by $H^s(G)$
with norm $\|\cdot\|_{s,G}$ and semi-norm $|\cdot|_{s,G}$, and denote the ${L}^2$-inner product on $G$ by $(\cdot,\,\cdot)_{G}$. $H_0^s(G)$ denotes the closure of $C_0^{\infty}(G)$ with respect to the norm $\|\cdot\|_{s,G}$.   We omit the subscript if $G=\Omega$, when there is no confusion caused. For any non-negative integer $\ell$, $\mathbb{P}_{\ell}(G)$ stands for the set of all polynomials on $G$ with the total degree at most $\ell$. Moreover, given two quantities $a$ and $b$, ``$a\lesssim{b}$" abbreviates ``$a\leq{C}b$", where the generic positive constant
$C$ is independent of the mesh size; $a\eqsim b$ is equivalent to $a\lesssim b\lesssim a$.

Now, we are ready to consider the problem \eqref{eq1-1} with homogeneous Dirichlet boundary condition and an initial configuration $u_0 \in H^2(\Omega)\cap H_0^1(\Omega)$, that is
\begin{equation}
\label{eq3-1}
\left\{\begin{split}
&u_t = \mathcal{D}\Delta u +f(t), \quad \bm x \in \Omega, \quad t_0 \leq t \leq t_0+T,\\
&u(t_0,\bm x)=u_0(\bm x), \quad \bm x \in \Omega,\\
&u(t,\bm x) = 0, \quad \bm x \in \partial \Omega,\quad t_0 \leq t \leq t_0+T, 
\end{split}
\right.
\end{equation}
where $0 < T < \infty$ is the total duration time.

\subsection{Semi-discretization in space by finite element approximation}

For simplicity, we denote $u(t)(\bm x)=u(t,\bm x)$, $\bm x \in \Omega$ in the following presentation. Suppose that $\Omega \in \mathbb{R}^d$ is a rectangular domain and is devided uniformly with the mesh size $h_i>0$ along $x_i$ direction. Associated with one-dimensional piecewise linear finite element space $V_{h_i}^i$ for direction $x_i$, we construct the finite element space $V_h$, which is a subspace of $H_0^1(\Omega)$ and can be denoted as
\begin{equation}
\label{tensor-FES}
\begin{split}
V_h:
=& {\rm span}\{ {\phi_1^{i_1}}(x_1)\cdots \phi_d^{i_d}(x_d):\ 1\le i_1\le N_1,\cdots,1\le i_d\le N_d \},
\end{split}
\end{equation}
where $h=\max\limits_{1 \leq i \leq d}h_i$, $\phi_i^j(x_i)$ is the j-th basis function of $V_h^i$, $N_i$ is the number of uniform subdomains of direction $x_i$. For the forthcoming error analysis, we assume the partition $\mathcal{T}_h$ is quasi-uniform, i.e., $h \eqsim h_i$ for all $1 \leq i \leq d$.

Using the variational framework, the above problem \eqref{eq3-1} can be converted to find $u_h \in V_h$, which satisfies that for each $v_h \in V_h$,
\begin{equation}
\label{eq3-11}
\left\{\begin{split}
& (u_{h,t},v_h)  + a(u_h,v_h) = (f(t),v_h), \quad t_0 \leq t \leq t_0+T, \\
&u_h(t_0)=P_hu_0, 
\end{split}
\right.
\end{equation}
where the bilinear operator $a(\cdot,\cdot)$ is symmetric and defined by 
\begin{equation}
\label{bilinear}
a(w,v)=\int_{\Omega}\mathcal{D}\nabla w \cdot \nabla v \dd \bm x, \quad w, v \in H_0^1(\Omega), 
\end{equation}
and $P_h:L^2(\Omega) \rightarrow V_h$ is the $L^2$-orthogonal projection operator and is stable with respect to $L^2$-norm.

According to the inverse inequality for finite elements and Riesz representation theorem, we can derive that there exists abounded linear operator $L_h:V_h\to V_h$ such that
\begin{equation}
\label{A-h}
a(w_h,v_h)= (L_h w_h, v_h), \quad \forall w_h,\ v_h\in V_h.
\end{equation}
Then the problem \eqref{eq3-11} can be reformulated as
\begin{equation}
\label{eq1-5}
\left\{
\begin{split}
&u_{h,t}+L_hu_h=P_hf, \quad \bm x \in \Omega, \quad t_0 \leq t \leq t_0+T,\\
&u_h(t_0)=P_h u_0, \quad \bm x \in \Omega.
\end{split}
\right.
\end{equation}

Until now, we have derived the semi-discrete scheme for \eqref{eq3-1}.

\subsection{Linear exponential integrator finite element method}

As for the temporal discretization, we divide the time interval $[t_0,t_0+T]$ into $N_T>0$ subintervals which are denoted as
\begin{equation*}
\tau_{n+1}=\tau_n+\Delta \tau_n, \quad n = 0, \cdots, N_T - 1,\quad\mbox{and } \Delta \tau=\max\limits_{0\le n\le N_T-1}\Delta \tau_n.
\end{equation*}

Denote $\{e^{-tL_h} \}_{t\geq 0}$ as the semigroup on $V_h$ with the infinitesimal generator $(-L_h)$. Hence, by the Duhamel formula, the solution $u_h$ to the problem \eqref{eq1-5} can be further expressed as
\begin{equation}
\label{eq1-6}
u_h(\tau_{n+1}) = e^{-\Delta \tau_nL_h }u_h(\tau_n)+\int_{0}^{\Delta\tau_n} e^{-(\Delta \tau_n - \sigma)L_h}P_hf(\tau_n+\sigma)\dd \sigma.
\end{equation}

For computing the numerical solution in \eqref{eq1-6} efficiently, we apply linear exponential Runge-Kutta method \cite{HochbruckOstermann2010} to compute the integral and obtain the approximate solution. Define $ u_h^n$ as the numerical solution obtained at time $t_n$ after temporal and spatial discretization. Then a fully-discrete scheme for solution of the linear parabolic problem \eqref{eq3-1} can be expressed as follows,
\begin{equation}
\label{eq1-7}
u_h^{n+1}=e^{-\Delta \tau_n L_h}u_h^n+\Delta \tau_n \sum\limits_{i=1}^s b_i(-\Delta \tau_n L_h)P_h f(\tau_n+c_i\Delta \tau_n), \quad n=0, \cdots, N_T-1,
\end{equation}
where the interpolation nodes $c_1, \cdots, c_s$ are $s$ different nodes selected in $[0, 1]$ and the weights are:
\begin{equation}
\label{eq3-12}
b_i(-\Delta \tau_n L_h) = \int_{0}^1 e^{-\Delta \tau_n(1-\theta)L_h}l_i(\theta)\dd \theta,
\end{equation}
and $l_i(\theta)$ are the familiar Lagrange interpolation polynomials
\[l_i(\theta):=\prod\limits_{m=1,m \neq i}^s \frac{\theta-c_m}{c_i-c_m}, \quad i = 1 , \cdots s. \]

Until now, we have proposed a fully-discrete numerical method called the exponential integrator finite element method (abbreviated as EIFE-linear) for linear parabolic equation (\ref{eq1-1}). Furthermore, since the semi-discrete scheme \eqref{eq1-5} can be regarded as an ODE system, it's worthy to note that EIFE-linear method can also be implemented in the parallel-in-time pattern. 

\subsection{Exponential integrator finite element method in parallel-in-time pattern}

Next, we design a parallel EIFE-linear method, which is combined with Parareal method and achieve significant speedup. Before applying Parareal algorithm to compute the fully-discrete scheme (\ref{eq1-7}) in parallel, we add more notations for Parareal algorithm. The whole time interval $[t_0, t_0+T]$ is divided into $N$ coarse time interval and each coarse subinterval is denoted as $[T_n, T_{n+1}]$ with time step $\Delta T_n, n=0, \cdots, N-1$. Then each coarse interval is divided into $M$ fine intervals while the time step of each fine interval $[t_{l}, t_{l+1}], l=0, \cdots, NM-1$ is $\Delta t_{l}$. It's obvious that $T_n=t_{nM}$ for $n=0, \cdots, N-1$. $\mathcal{G}$ is the less precise but cheaper scheme chosen to coarse grids while $\mathcal{F}$ is the more precise one chosen to fine grids. Taking $v$ as the starting point, denote $\mathcal{G}(v; T_n,T_{n+1}), \mathcal{H}(v;T_n, T_{n+1})$ as the results computed in the time interval $[T_n, T_{n+1}]$ by less precise method $\mathcal{G}$ and accurate method $\mathcal{H}$ respectively. $\mathcal{F}(v; t_{l}, t_{l+1})$ is the result calculated by more precise method $\mathcal{F}$ and $\mathcal{F}^M(v;T_n,T_{n+1})$ is obtained by applying successively $M$ substeps of fine propagator $\mathcal{F}$ in the subinterval $[T_n, T_{n+1}]$. To illustrate the algorithm better, define $U_{h}^{n,(k)}$ as the numerical result computed at time $T_n$ after $k$ iterations. Meanwhile, we designate the time-sequential parts of the algorithm by symbol $\ominus$, and the parallel parts by symbol $\oplus$.

Since the coarse propagator is applied to the coarse grids with larger time step, it's naturally to require coarse propagator $\mathcal{G}$ to be unconditionally stable. In the following section, we will prove that EIFE-linear method is unconditionally stable for linear problems (Section \ref{linear_theory}). Since Parareal method has been proved that once it's convergent, the approximation on the coarse grids can achieve the accuracy of using the $\mathcal{F}$ propagator step by step with the small time step $\Delta t_n$. Therefore, in order to balance the need of computational cost and numerical accuracy, we choose a less precise EIFE-linear scheme with $p$ stages as the coarse propagator $\mathcal{G}$ while a more precise one with $q$ stages as the fine propagator $\mathcal{F}$ ($p \leq q$), that is,
\begin{equation*}
\left\{\begin{split}
\mathcal{G}(v;T_n,T_{n+1})&=e^{-\Delta T_nL_h}v+\Delta T_n\sum\limits_{i=1}^p b_i(-\Delta T_nL_h)P_hf(T_n+c_i\Delta T_n),\\
\mathcal{F}(v;t_l,t_{l+1})&= e^{-\Delta t_lL_h}v+\Delta t_l\sum\limits_{i=1}^q b_i(-\Delta t_lL_h)P_hf(t_l+c_i\Delta t_l).
\end{split}
\right.
\end{equation*}
Until now, we have developed the Parareal exponential integrator finite element method for linear problems (abbreviated as PEIFE-linear) and we show the framework of PEIFE-linear method as below:
\begin{algorithm}[ht]
	\caption{\textbf{Parareal exponential integrator finite element algorithm}}
	\label{algorithm1}
	\begin{algorithmic}[1]
		\State Set $k=0$ and $U_h^{0,(0)}=U_h^0=P_hu_0$.
		\State $\ominus$ Compute by coarse propagator sequentially 
		\begin{equation*}
		U_{h}^{n+1,(0)}=\mathcal{G}(U_h^{n,(0)};T_n,T_{n+1}), \quad n=0, \cdots, N-1.
		\end{equation*}
		\While{the stopping criteria not satisfied}
		\State $\oplus$ For $n=0, \cdots, N - 1$, compute by fine propagator in parallel
		\begin{equation*}
		\overline{U}_h^{n+1,(k+1)}=\mathcal{F}^M(U_h^{n,(k)};T_n,T_{n+1}).
		\end{equation*}
		\State $\ominus$ Perform sequentially for $n=0, \cdots, N-1$
		\begin{equation}
		\label{Parareal}
		U_h^{n+1,(k+1)}=\mathcal{G}(U_h^{n,(k+1)};T_n,T_{n+1})+\overline{U}_h^{n+1,(k+1)}-\mathcal{G}(U_h^{n,(k)};T_n,T_{n+1}),
		\end{equation}
		
		with $U_h^{0,(k+1)}=U_h^0$.
		\State Set $k=k+1$.
		\EndWhile
	\end{algorithmic}
\end{algorithm}

It's worth noting that the PEIFE-linear method shown here is in abstract framework for further theoretical analysis. What's more, we can express (\ref{eq3-11}) in the matrix-vector form by representing the finite element space $V_h$ with nodal basis functions. The relative mass and coefficient matrix can be diagonalized simultaneously, whose multiplication with a vector can be implemented efficiently by the means for FFT and tensor product operations. We have also provided the details of matrix-vector implementation in Section 4 in \cite{HuangJu2022} for interested readers. Also, the fast implementation can be naturally extended to cases with periodic boundary conditions.

%\begin{remark}
%	\label{computational_cost}
%	It's worthy to note that applying FFT to computing the coarse propagator $\mathcal{G}$ and fine propagator $\mathcal{F}$ can lead to an efficient implementation of PEIFE-linear method. As for parabolic equation with Dirichlet boundary condition, the fast solver can be realized by applying a Discrete Sine Transform (DST) in each dimension separately, while that can be realized by Discrete Fourier Transform (DFT) for equation with periodic boundary condition. For a $d$-dimensional problem, the computational cost of each time step is of $O(\log_2N\prod_{i=1}^dN_i)$, where $N=\max\{N_1, \cdots, N_d \}$.
%\end{remark}

\section{Convergence analysis of EIFE-linear method}\label{linear_theory}

In this section, we first focus on developing the error bound of EIFE-linear method for solving the problem \eqref{eq1-1} with homogenous Dirichlet boundary conditions. It is straightforward to extend the arguments to the same problem with nonhomogenous Dirichlet boundary conditions. For the error between the exact solution $u(t)$ and the numerical solution $\big\{u_h^n \big\}$ measured in the $L^2$-norm, we have by the triangle inequality
\begin{equation}
\label{all_error}
\big\|u(\tau_n)-u_h^n\big\|_0 \leq \|u(\tau_n)-u_h(\tau_n)\|_0+\big\|u_h(\tau_n)-u_h^n\big\|_0.
\end{equation}
Thus, we will estimate $\|u(\tau_n)-u_h(\tau_n)\|_0$ and $\big\|u_h(\tau_n)-u_h^n\big\|_0$ respectively.

\begin{assumption}
	\label{assumption3}
	The exact solution $u(t)$ satisfies the following regularity conditions:
	\begin{subequations}
		\begin{align}
		\label{assumption3-1} \sup\limits_{t_0 \leq t \leq t_0+T} \|u(t)\|_{2, \Omega} \lesssim 1, \\
		\label{assumption3-2} \sup\limits_{t_0 \leq t \leq t_0+T} \|u_t(t)\|_{2, \Omega} \lesssim 1,
		\end{align}
		where the hidden constants may depend on $t_0$ and $T$.
	\end{subequations}
\end{assumption}

\begin{assumption}
	\label{assumption2}
	The function $f(t)$ is sufficiently smooth with respect to $t$, i.e., for a positive integer $r$, it holds
	\begin{equation}
	\label{smooth}
	\sum\limits_{\alpha \leq r}\Big|f^{(\alpha)}(t) \Big|\lesssim 1, \quad \forall t \in [t_0, t_0+T].
	\end{equation}
\end{assumption}

First of all, the following result readily comes from Theorem 1.2 in \cite{ThomeeVidar2006}.
\begin{lemma}\label{lemma2}
	Let $u_h(t)$ and $u(t)$ be the solutions of \eqref{eq1-5} and \eqref{eq3-1}. Assume that $u(t)$ fulfills Assumption \ref{assumption3}. Then the error bound becomes
	\begin{equation}
	\label{error_semidiscrete}
	\|u_h(t)-u(t)\|_0 \lesssim \|P_hu_0-u_0\|_0+h^2\Big(\|u_0\|_2+\int_0^t \|u_t\|_2 \dd \sigma \Big), \quad t_0\leq t \leq t_0+T.
	\end{equation}
\end{lemma}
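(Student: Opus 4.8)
The plan is to obtain the semi-discrete error bound \eqref{error_semidiscrete} by invoking the standard finite element theory for parabolic problems, as packaged in Theorem~1.2 of \cite{ThomeeVidar2006}. That classical result states that if $u_h$ and $u$ solve the spatially semi-discrete Galerkin problem and the continuous problem respectively, with $u_h(t_0)=P_hu_0$, then
\[
\|u_h(t)-u(t)\|_0 \le \|P_hu_0 - u_0\|_0 + C h^2\Bigl(\|u_0\|_2 + \int_{t_0}^{t}\|u_t\|_2\dd\sigma\Bigr),
\]
for a constant $C$ depending only on $\Omega$ and the shape-regularity of $\mathcal{T}_h$. So the first step is simply to check that our setting satisfies the hypotheses of that theorem: the bilinear form $a(\cdot,\cdot)$ in \eqref{bilinear} is symmetric, continuous and coercive on $H_0^1(\Omega)$ (coercivity follows from the Poincar\'e inequality, with constant proportional to $\mathcal{D}$); the finite element space $V_h$ in \eqref{tensor-FES} consists of continuous piecewise-multilinear functions on a quasi-uniform rectangular partition and hence satisfies the standard approximation property $\inf_{v_h\in V_h}\|w - v_h\|_0 + h\|w-v_h\|_1 \lesssim h^2\|w\|_2$ for $w\in H^2\cap H_0^1$; and the chosen initial data is $u_h(t_0)=P_hu_0$, exactly as required.

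Second, I would note that the $L^2$-stability of the projection $P_h$ (stated in the text after \eqref{eq3-11}) together with Assumption~\ref{assumption3} guarantees that the right-hand side of \eqref{error_semidiscrete} is finite and in fact $O(h^2)$: indeed $\|u_0\|_2 = \|u(t_0)\|_2 \lesssim 1$ by \eqref{assumption3-1}, and $\int_{t_0}^t\|u_t\|_2\dd\sigma \le T\sup_{t}\|u_t(t)\|_2 \lesssim 1$ by \eqref{assumption3-2}; also $\|P_hu_0-u_0\|_0 \lesssim h^2\|u_0\|_2$ by the approximation property of $P_h$. Hence the stated bound \eqref{error_semidiscrete} reduces, under our assumptions, to $\|u_h(t)-u(t)\|_0 \lesssim h^2$, which is the form that will actually be used later in \eqref{all_error}. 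Strictly speaking, the lemma as stated is just the cited theorem verbatim, so the ``proof'' is the verification of hypotheses plus a one-line citation.

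If one instead wanted a self-contained argument rather than a citation, the route is the classical Ritz-projection splitting: write $u_h - u = (u_h - R_h u) + (R_h u - u) =: \vartheta + \rho$, where $R_h:H_0^1\to V_h$ is the elliptic projection defined by $a(R_h u - u, v_h)=0$ for all $v_h\in V_h$. The term $\rho$ and $\rho_t$ are controlled directly by the $H^2$ elliptic regularity / Aubin--Nitsche duality estimate $\|\rho(t)\|_0 \lesssim h^2\|u(t)\|_2$ and $\|\rho_t(t)\|_0\lesssim h^2\|u_t(t)\|_2$. For $\vartheta$ one derives the error equation $(\vartheta_t, v_h) + a(\vartheta, v_h) = -(\rho_t, v_h)$, tests with $v_h = \vartheta$, uses coercivity to drop the $a(\vartheta,\vartheta)$ term and Cauchy--Schwarz on the right, obtaining $\tfrac{d}{dt}\|\vartheta\|_0 \le \|\rho_t\|_0$; integrating in time and combining with $\|\vartheta(t_0)\|_0 \le \|P_hu_0 - R_hu_0\|_0 \lesssim \|P_hu_0-u_0\|_0 + h^2\|u_0\|_2$ yields exactly \eqref{error_semidiscrete} via the triangle inequality. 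The only mild subtlety is the treatment of the initial term: one must account for the mismatch between $P_hu_0$ and $R_hu_0$, which is again absorbed by the $h^2\|u_0\|_2$ term. I do not anticipate any real obstacle here; the entire content of the lemma is that the rectangular multilinear tensor-product space behaves like any other quasi-uniform $H^1$-conforming space of approximation order two, so the classical estimate applies unchanged, and the genuinely new work of the paper lies in the temporal (exponential Runge--Kutta) and Parareal parts that follow.
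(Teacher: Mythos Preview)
Your proposal is correct and matches the paper's approach exactly: the paper does not give a proof but simply states that the result ``readily comes from Theorem~1.2 in \cite{ThomeeVidar2006}.'' Your verification of the hypotheses and your optional Ritz-projection sketch are both sound and in fact provide more detail than the paper itself.
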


A direct consequence of Lemma \ref{lemma2} is
\begin{equation}\label{semidiscrete_error}
\|u(\tau_n)-u_h(\tau_n)\|_0 \lesssim h^2, \quad \forall n = 0, \cdots, N_T,
\end{equation}
where the hidden constant is independent of $h$. 

\begin{theorem}\label{thm1}
	Suppose that the function $f$ satisfies Assumption \ref{assumption2} and the exact solution $u(t)$ satisfies the regularity condition \eqref{assumption3-1} and \eqref{assumption3-2} in Assumption \ref{assumption3}. Assume that $\{u_h^n\}$ are numerical solution at time $\tau_n$ by applying EIFE-linear method sequentially with $s$ different interpolation nodes $c_1, \cdots, c_s$ in $[0, 1]$. Then the numerical error bound becomes
	\begin{equation}\label{fully_error1}
	\big\|u_h(\tau_n)-u_h^n\big\|_0 \lesssim (\Delta \tau)^s,
	\end{equation}
	where the hidden constant is independent of $h$ and $\Delta \tau$.
\end{theorem}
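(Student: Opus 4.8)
The plan is to follow the standard error-analysis strategy for exponential Runge–Kutta methods (as in Hochbruck–Ostermann), adapting it to the semidiscrete setting where $-L_h$ generates an analytic semigroup $\{e^{-tL_h}\}_{t\ge0}$ on $V_h$. First I would derive the local truncation error: starting from the Duhamel formula \eqref{eq1-6} on a single step $[\tau_n,\tau_{n+1}]$ and comparing it with one step of the numerical scheme \eqref{eq1-7}, the difference is
\[
\delta_{n+1}=\int_0^{\Delta\tau_n} e^{-(\Delta\tau_n-\sigma)L_h}P_h\Big(f(\tau_n+\sigma)-\sum_{i=1}^s l_i(\sigma/\Delta\tau_n)\,f(\tau_n+c_i\Delta\tau_n)\Big)\dd\sigma,
\]
i.e. the integrand measures the Lagrange interpolation error of the smooth function $\sigma\mapsto f(\tau_n+\sigma)$ at the $s$ nodes $c_i\Delta\tau_n$. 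By Assumption \ref{assumption2} and the standard interpolation remainder, this interpolation error is $O((\Delta\tau_n)^s)$ in the appropriate sense, and since $\|e^{-tL_h}P_h\|_{0}\le 1$ (the semigroup is a contraction on $L^2$ because $a(\cdot,\cdot)$ is symmetric positive semidefinite), one gets $\|\delta_{n+1}\|_0\lesssim (\Delta\tau_n)^{s+1}$. Here I should be careful to formulate $f^{(\alpha)}$ as $V_h$-valued (or just scalar, as in \eqref{smooth}) and note that $P_h$ commutes with the time derivatives so no loss of regularity occurs.

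Next I would set up the error recursion. Writing $e_h^n:=u_h(\tau_n)-u_h^n$ and subtracting \eqref{eq1-7} from \eqref{eq1-6} (with the local error inserted), we obtain
\[
e_h^{n+1}=e^{-\Delta\tau_n L_h}e_h^n+\delta_{n+1},\qquad e_h^0=0,
\]
since both the exact semidiscrete solution and the numerical solution start from $P_hu_0$. Unrolling the recursion gives $e_h^n=\sum_{j=1}^{n} e^{-(\tau_n-\tau_j)L_h}\delta_j$, and then
\[
\|e_h^n\|_0\le \sum_{j=1}^n \|e^{-(\tau_n-\tau_j)L_h}\|_0\,\|\delta_j\|_0\le \sum_{j=1}^n \|\delta_j\|_0\lesssim \sum_{j=1}^n (\Delta\tau_{j-1})^{s+1}\le (\Delta\tau)^s\sum_{j=1}^n \Delta\tau_{j-1}=(\Delta\tau)^s\,T,
\]
which is exactly \eqref{fully_error1} with a constant depending on $T$ but independent of $h$ and $\Delta\tau$. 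The $h$-independence is the crucial point and comes from using only the uniform contractivity bound $\|e^{-tL_h}P_h\|_0\le1$ rather than any resolvent estimate with $h$-dependent constants — no inverse inequality enters the final bound.

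The main obstacle I expect is making the local truncation error estimate fully rigorous in the $V_h$-valued Bochner-integral setting: one must justify that $\sigma\mapsto P_hf(\tau_n+\sigma)$ is $C^s$ with derivatives bounded uniformly in $h$ (this follows from $L^2$-stability of $P_h$ applied to \eqref{smooth}, interpreting $f^{(\alpha)}$ appropriately), and that the Lagrange interpolation remainder formula — $f(\tau_n+\sigma)-\sum_i l_i(\sigma/\Delta\tau_n)f(\tau_n+c_i\Delta\tau_n)=\frac{(\Delta\tau_n)^s}{s!}\prod_{i}(\sigma/\Delta\tau_n-c_i)\,f^{(s)}(\xi)$ — can be applied componentwise and integrated against the bounded kernel $\|e^{-(\Delta\tau_n-\sigma)L_h}P_h\|_0\le1$. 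A secondary subtlety is that the nodes $c_i$ are "random" (arbitrary distinct points in $[0,1]$); but since $\prod_{i}|\theta-c_i|\le 1$ for $\theta\in[0,1]$ and the Lebesgue-type constants of the nodes only affect the hidden constant, this causes no difficulty in the order estimate. Everything else — telescoping the recursion, summing the geometric-free series — is routine.
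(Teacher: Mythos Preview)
Your argument is correct and complete. The difference from the paper is one of packaging rather than substance: the paper does not carry out the local-error computation and telescoping sum directly, but instead verifies the abstract hypotheses of Theorem~2.7 in Hochbruck--Ostermann (Acta Numer.\ 2010) and then cites that result. Concretely, the paper checks (i) that the weights $b_i(-\Delta\tau_n L_h)$ are uniformly bounded (via Lemma~\ref{lemma1}), and (ii) that the order conditions $\psi_j(-\Delta\tau_n L_h)=0$ hold for $1\le j\le s$, which is just the statement that Lagrange interpolation at $c_1,\dots,c_s$ reproduces $\theta^{j-1}$ exactly for $j\le s$; the cited theorem then delivers the $O((\Delta\tau)^s)$ bound. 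Your route---writing the local defect $\delta_{n+1}$ as the integral of the contraction semigroup against the Lagrange interpolation remainder of $\sigma\mapsto P_hf(\tau_n+\sigma)$, bounding it by $(\Delta\tau_n)^{s+1}$, and summing via $\|e^{-tL_h}\|_0\le 1$---is exactly what lies behind that theorem, specialized to this interpolatory exponential quadrature. Your version is more self-contained and makes the $h$-independence (from semigroup contractivity and $L^2$-stability of $P_h$) explicit, at the cost of a few more lines; the paper's version is shorter but relies on an external reference.
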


\begin{proof}

	Since all interpolation nodes $c_1, \cdots, c_s$ are fixed, according to the definition of $b_i(-\Delta \tau_nL_h)$ in \eqref{eq3-12} and \eqref{eq1-8} in Lemma \ref{lemma1}, we can immediately obtain that
	\begin{equation*}
	\begin{split}
	\|b_i(-\Delta \tau_n L_h)\|_0&=\Big\|\int_0^1 e^{-\Delta \tau_n (1-\theta) L_h}\prod\limits_{m=1,m \neq i} \frac{\theta-c_m}{c_i-c_m} \dd \theta\Big\|_0\\
	&\leq \int_0^1 \Big\|e^{-\Delta \tau_n(1-\theta)L_h}\Big\|_0 \Big|\prod\limits_{m=1,m \neq i} \frac{\theta-c_m}{c_i-c_m}\Big| \dd \theta\\
	&\lesssim \int_0^1  \Big|\prod\limits_{m=1,m \neq i}\frac{\theta-c_m}{c_i-c_m}\Big|\dd \theta \lesssim 1.
	\end{split}
	\end{equation*}
	Therefore, $\|b_i(-\Delta \tau_n L_h)\|_{0}$ are uniformly bounded for $\Delta \tau_n >0, i = 1, \cdots, s$.
	
	Next, define
	\begin{equation*}
	\psi_j(-\Delta \tau_n L_h)=\varphi_j(-\Delta \tau_n L_h)-\sum\limits_{i=1}^s b_i(-\Delta \tau_n L_h)\frac{c_i^{j-1}}{(j-1)!},
	\end{equation*}
	where
	\begin{equation*}
	\varphi_j(-\Delta \tau_n L_h)=\int_{0}^1 e^{-\Delta \tau_n(1-\theta)L_h}\frac{\theta^{j-1}}{(j-1)!} \dd \theta, \quad j \geq 1.
	\end{equation*}
	Then according to the Lagrange interpolation formula, we find that, for $1 \leq j \leq s$,
	\begin{equation*}
	\int_0^1 e^{-(1-\theta)\Delta \tau_n L_h}\frac{\theta^{j-1}}{(j-1)!}\dd \theta = \int_0^1 \frac{e^{-\Delta \tau_n(1-\theta)L_h}}{(j-1)!}\sum\limits_{i=1}^s \prod_{m=1,m\neq i}^s\frac{\theta-c_m}{c_i - c_m}c_i^{j-1}\dd \theta.
	\end{equation*}
	
	Therefore,
	\begin{equation*}
	\varphi_j(-\Delta \tau_n L_h) = \sum\limits_{i=1}^s b_i(-\Delta \tau_n L_h) \frac{c_i^{j-1}}{(j-1)!},
	\end{equation*}
	which implies that
	\begin{equation*}
	\psi_j(-\Delta \tau_n L_h) = 0, \quad 1 \leq j \leq s.
	\end{equation*}
	
	Since $f$ fulfills Assumption \ref{assumption2} and $P_h$ is a stable projection operator with respect to $L^2$-norm, we can derive that $(P_hf)^{(s)} \in L^1(\Omega)$ also holds. Hence, according to the preceding arguments and Lemma \ref{lemma1}, we know the assumptions of Theorem 2.7 in \cite{HochbruckOstermann2010} are held,  so we can obtain
	\begin{equation*}
	\|u_h(\tau_n) - u_h^n\|_{0} \lesssim \Delta \tau^s
	\end{equation*}
	in view of this theorem.
	
\end{proof}

\begin{remark}
	It's worth noting that no restriction on the time step size $\Delta \tau$ is imposed in Theorem \ref{thm1}, which implies that the proposed EIFE-linear method is stable with large time stepping and appropriate to be applied to coarse grids. Moreover, the temporal accuracy of EIFE-linear method can reach to any order when the number of interpolation nodes is sufficiently large.
\end{remark}

\section{Convergence analysis of PEIFE-linear method for linear problems}\label{PFEM_theory}

Since PEIFE-linear method can be regarded as a modified EIFE-linear method with significant speedup, it's crucial to show that computing PEIFE-linear method in distribution can be as accurate as computing EIFE-linear method sequentially. In this section, we turn to discuss the error analysis of PEIFE-linear method for a linear parabolic problem with the homogenous Dirichlet boundary condition, i.e., the problem \eqref{eq3-11}. The conclusion can be extended to the equations with nonhomogenous Dirichlet boundary conditions with minor revision. The corresponding notations are same as that defined in Section \ref{algorithm-description}.

In the following analysis, we always denote $v(t)(\bm x)=v(t,\bm x)$ and $w(t)(\bm x)=w(t,\bm x)$ for $\bm x \in \Omega$ for brevity. Also, we assume the time intervals are uniform, i.e., $\Delta t = \Delta t_0 = \cdots = \Delta t_{NM-1}=\frac{T}{NM}$ and $\Delta T=\Delta T_0=\cdots=\Delta T_{N-1}=\frac T N$. We also represent $u_h(t)$ as the solution of the semi-discrete (in space) problem \eqref{eq1-5} (or \eqref{eq1-6}), and $\big\{U_{h}^{n,(k)} \big\} $ as the numerical solution produced by the PEIFE-linear method (see Algorithm \ref{algorithm1}). Following ideas about analyzing Parareal method in \cite{GanderVandewalle2007, Guillaume2007}, we can first suppose the chosen coarse propagator $\mathcal{G}$ and fine propagator $\mathcal{F}$ satisfy the following assumptions. 

\begin{assumption}
	Suppose $V_h$ is the finite element space with norm $\|\cdot \|_{0}$, and
	
	\label{assumption1}
	\begin{enumerate}[(1)]
		\item Fine propagator $\mathcal{F}$ fulfills the following regularity condition:
		\begin{equation}
		\label{assumption_eq1}
		\begin{split}
		\max\limits_{0 \leq n \leq N-1}\|\mathcal{F}^M(v;T_n,T_{n+1})-\mathcal{H}(v;T_n,T_{n+1})\|_{0} &\lesssim (\Delta t)^{q+1},
		\end{split}
		\end{equation}
		where $\Delta t = \max\{\Delta t_{l}|l=0, \cdots, NM-1  \}$ and the hidden constant is positive and independent of $\Delta t$ and $v$.
		\item Coarse propagator $\mathcal{G}$ fulfills the following regularity condition:
		\begin{equation}
		\label{assumption_eq2}
		\begin{split}
		\max\limits_{0 \leq n \leq N-1}\|\mathcal{G}(v;T_n,T_{n+1})-\mathcal{H}(v;T_n,T_{n+1})\|_{0} &\lesssim (\Delta T)^{p+1},
		\end{split}
		\end{equation}
		where $\Delta T = \max\{\Delta T_n|n=0, \cdots, N - 1\}$ and the hidden constant is positive and independent of $\Delta T$ and $v$.
		\item Coarse propagator $\mathcal{G}$ can satisfy the following Lipschitz conditions:
		\begin{align}
		\label{assumption_eq3}
		&\|\mathcal{G}(v;T_n,T_{n+1})-\mathcal{G}(w;T_n,T_{n+1})\|_{0} - \|v-w\|_0 \lesssim \Delta T\|v-w\|_{0},\\
		\label{assumption_eq4}
		&\|\mathcal{G}(v;T_n,T_{n+1})-\mathcal{G}(w;T_n,T_{n+1})+\mathcal{H}(w;T_n,T_{n+1})-\mathcal{H}(\xi;T_n,T_{n+1})\|_0\nonumber\\
		& -\|v-\xi\|_0 \lesssim \Delta T\|v-\xi\|_0+(\Delta T)^{p+1}\|w-\xi\|_0,
		\end{align}
		
		where the hidden constants are positive and independent of $\Delta T, v, w$ and $\xi$.
	\end{enumerate}
\end{assumption}

We present below a lemma related to the semigroup $\{e^{-t L_h} \}_{t\geq 0}$ with the infinitesimal generator $(-L_h)$ given by \eqref{A-h}, which has been illustrated in Lemma 3.1 in \cite{HuangJu2022} and is important to the forthcoming numerical analysis.%and some terms used in the exponential Runge-Kutta stepping

\begin{lemma}\label{lemma1}
	Let $V_h$ be the finite element space given in \eqref{tensor-FES}, which is equipped with the norm $\|\cdot\|_0$. Then there holds
	\begin{equation}
	\label{eq1-8}
	\|e^{-\tau L_h}\|_0 + \|\tau^{\gamma}L_h^{\gamma}e^{-\tau L_h}\|_0 \lesssim 1, \quad \forall \tau \geq 0, h > 0,
	\end{equation}
	where $\gamma>0$ is any given parameter, and the hidden constant is positive and independent of $h, \tau$ but may depend on $\gamma$.
\end{lemma}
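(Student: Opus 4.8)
The plan is to reduce \eqref{eq1-8} to an elementary bound on scalar functions by diagonalizing $L_h$. First I would note that, since the bilinear form $a(\cdot,\cdot)$ of \eqref{bilinear} is symmetric and $(\cdot,\cdot)$ is precisely the inner product inducing the norm $\|\cdot\|_0$ on $V_h$, the operator $L_h$ of \eqref{A-h} is self-adjoint with respect to $(\cdot,\cdot)$, because $(L_hw_h,v_h)=a(w_h,v_h)=a(v_h,w_h)=(w_h,L_hv_h)$ for all $w_h,v_h\in V_h$; moreover $(L_hw_h,w_h)=a(w_h,w_h)=\mathcal{D}\,|w_h|_{1,\Omega}^2\ge 0$, so $L_h$ is positive semidefinite (indeed positive definite on $V_h\subset H_0^1(\Omega)$ by the Poincar\'e inequality, although only semidefiniteness is used below). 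Hence $V_h$ admits an $L^2$-orthonormal basis of eigenvectors $\{\phi_j\}$ of $L_h$ with eigenvalues $0\le\mu_1\le\cdots\le\mu_{\dim V_h}$.

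Next, for any bounded $g:[0,\infty)\to\mathbb{R}$ the operator $g(L_h)$ defined by $g(L_h)\phi_j=g(\mu_j)\phi_j$ is again self-adjoint for $(\cdot,\cdot)$, so its $\|\cdot\|_0$-operator norm equals $\max_{j}|g(\mu_j)|$. I would apply this with $g(x)=e^{-\tau x}$ to get $\|e^{-\tau L_h}\|_0=\max_j e^{-\tau\mu_j}\le 1$ for every $\tau\ge 0$ (using $\mu_j\ge 0$), and with $g(x)=(\tau x)^{\gamma}e^{-\tau x}$ to get $\|\tau^{\gamma}L_h^{\gamma}e^{-\tau L_h}\|_0=\max_j(\tau\mu_j)^{\gamma}e^{-\tau\mu_j}\le\sup_{s\ge 0}s^{\gamma}e^{-s}=(\gamma/e)^{\gamma}$, a finite constant depending only on $\gamma$. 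Summing the two estimates and using the triangle inequality yields \eqref{eq1-8} with hidden constant $1+(\gamma/e)^{\gamma}$.

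There is no serious analytic obstacle here; the only point requiring attention is the uniformity in $h$. The eigenvalues $\mu_j$ do depend on $h$ (the largest grows like $h^{-2}$ by the inverse inequality), but this is irrelevant, because the scalar inequalities $e^{-s}\le 1$ and $s^{\gamma}e^{-s}\le(\gamma/e)^{\gamma}$ hold on all of $[0,\infty)$, independently of where the products $\tau\mu_j$ happen to lie; hence the final constant depends on $\gamma$ alone. If one wished to avoid explicit diagonalization, the same bound follows from the classical analytic-semigroup estimate $\|L_h^{\gamma}e^{-tL_h}\|\lesssim t^{-\gamma}$ together with $h$-independence of the sectoriality constants of $-L_h$, which is itself immediate from self-adjointness and positivity; but the eigenbasis route is the cleanest, and it essentially reproduces Lemma 3.1 of \cite{HuangJu2022}.
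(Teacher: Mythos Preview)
Your argument is correct, and it is essentially the same route the paper takes: the paper does not supply its own proof but simply cites Lemma~3.1 of \cite{HuangJu2022}, and your spectral-calculus computation (self-adjointness of $L_h$, nonnegative eigenvalues, and the scalar bounds $e^{-s}\le 1$, $s^{\gamma}e^{-s}\le(\gamma/e)^{\gamma}$ on $[0,\infty)$) is precisely the standard proof of that lemma. Your remark on why the bound is uniform in $h$ is the only subtle point, and you handled it correctly.
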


\begin{theorem}
	\label{thm3}
	Suppose that $f(t)$ fulfills Assumption \ref{assumption2} with some positive integer $q$. Assume that EIFE-linear method is chosen to fine grids with $q$ different interpolation nodes $c_1, \cdots, c_q \in [0, 1]$. If $v(t)$ is sufficiently smooth with respect to $t$, then 
	\begin{equation*}
	\begin{split}
	\max\limits_{0 \leq n \leq N-1}\|\mathcal{F}^M(v;T_n,T_{n+1})-\mathcal{H}(v;T_n,T_{n+1})\|_{0} &\lesssim \Delta T(\Delta t)^{q},
	\end{split}
	\end{equation*}
	is fulfilled and the hidden constant is independent of $\Delta T, \Delta t, v$, which means that EIFE-linear method is proper to be applied to fine grids.
\end{theorem}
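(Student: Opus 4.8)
The plan is to compare $\mathcal{F}^M(v;T_n,T_{n+1})$, which is $M$ successive applications of the $q$-stage EIFE-linear propagator with fine step $\Delta t = \Delta T/M$, against the exact flow $\mathcal{H}(v;T_n,T_{n+1})$ of the semi-discrete ODE system \eqref{eq1-5}. The natural strategy is a one-step/many-step telescoping argument: first bound the \emph{local} error of a single fine step, then accumulate it over the $M$ substeps using the uniform boundedness of the exact solution operator from Lemma \ref{lemma1}. Concretely, for each fine subinterval $[t_l,t_{l+1}]\subset[T_n,T_{n+1}]$ write $\mathcal{H}(u_h(t_l);t_l,t_{l+1}) - \mathcal{F}(u_h(t_l);t_l,t_{l+1})$ using the Duhamel representation \eqref{eq1-6} for $\mathcal{H}$ and the quadrature form \eqref{eq1-7}, \eqref{eq3-12} for $\mathcal{F}$; the difference is exactly the error of the $q$-node Lagrange interpolatory quadrature applied to $\sigma\mapsto e^{-(\Delta t-\sigma)L_h}P_hf(t_l+\sigma)$. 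Since $f$ is $C^r$ with $r\ge q$ (Assumption \ref{assumption2}) and $P_h$ is $L^2$-stable, the interpolation remainder gives a factor $(\Delta t)^q$ from the $q$-th derivative of the integrand together with one more power $(\Delta t)$ from the length of the integration interval, while the semigroup factors $e^{-(\Delta t-\sigma)L_h}$ and the weights $b_i(-\Delta t L_h)$ are uniformly bounded in $\|\cdot\|_0$ by Lemma \ref{lemma1} and by the bound on $\|b_i\|_0$ already established in the proof of Theorem \ref{thm1}. Hence the local error per fine step is $\lesssim (\Delta t)^{q+1}$, with a constant depending only on $f$, $q$, $\mathcal D$, $T$ — not on $h$, $\Delta t$, or $v$.

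Next I would turn the local bound into the global bound over the coarse interval. Introduce the sequence $w_l := \mathcal{F}^l(v;T_n,t_{nM+l})$ and the exact values $z_l := \mathcal{H}(v;T_n,t_{nM+l})$, and telescope
\begin{equation*}
w_M - z_M = \sum_{l=0}^{M-1}\Big(\mathcal{F}^{M-l}(w_l;t_{nM+l},T_{n+1}) - \mathcal{F}^{M-l-1}(\mathcal{H}(w_l;t_{nM+l},t_{nM+l+1});t_{nM+l+1},T_{n+1})\Big)+\big(z_M^{\ast}-z_M\big),
\end{equation*}
where each summand is $\mathcal{F}^{M-l-1}$ applied to the one-step defect. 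The subtlety here is that $\mathcal{F}$ is affine, not linear, in its starting value — it has the inhomogeneous term $\Delta t\sum_i b_i P_h f$ — so I would observe that the \emph{difference} of $\mathcal{F}$ applied to two starting points equals $e^{-\Delta t L_h}$ applied to their difference; consequently $\mathcal{F}^{M-l-1}$ applied to a defect $\delta_l$ contributes exactly $e^{-(M-l-1)\Delta t L_h}\delta_l$ in norm, which is $\lesssim \|\delta_l\|_0$ uniformly by Lemma \ref{lemma1}. Summing $M$ local errors each of size $(\Delta t)^{q+1}$ yields $M(\Delta t)^{q+1} = \Delta T (\Delta t)^q$, which is exactly the claimed estimate. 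Taking the maximum over $n=0,\dots,N-1$ preserves the bound since every constant is $n$-independent.

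The main obstacle — really the only place where care is needed — is the honest estimate of the interpolatory quadrature error for an \emph{operator-valued} integrand, i.e. controlling $\int_0^{\Delta t} e^{-(\Delta t-\sigma)L_h}P_hf(t_l+\sigma)\dd\sigma - \Delta t\sum_{i=1}^q b_i(-\Delta t L_h)P_hf(t_l+c_i\Delta t)$ in $\|\cdot\|_0$ with a constant uniform in $h$. The clean way to do this is exactly the device used in the proof of Theorem \ref{thm1}: expand via the $\psi_j$/$\varphi_j$ functions, use that $\psi_j(-\Delta t L_h)=0$ for $1\le j\le q$ so that the leading $q$ Taylor terms of $f$ cancel, and bound the remainder using the integral form of the Taylor remainder together with the uniform bounds on $\|e^{-\Delta t(1-\theta)L_h}\|_0$ and $\|b_i(-\Delta t L_h)\|_0$. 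In fact this is precisely the content of Theorem 2.7 in \cite{HochbruckOstermann2010} applied on a single interval of length $\Delta t$ (rather than over the whole $[t_0,t_0+T]$ as in Theorem \ref{thm1}), so the cleanest writeup simply invokes that theorem on each fine step to get the $(\Delta t)^{q+1}$ local bound and then performs the elementary telescoping sum above; everything else is routine.
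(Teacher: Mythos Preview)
Your proposal is correct and reaches the same estimate as the paper, but the paper gets there more directly. Because the homogeneous part of the EIFE-linear step is \emph{exactly} $e^{-\Delta t L_h}$, the paper simply expands $\mathcal{F}^M(v;T_n,T_{n+1})$ by recurrence and notes that $e^{-\Delta T L_h}v$ cancels identically with the corresponding term in $\mathcal{H}(v;T_n,T_{n+1})$; what remains (equation \eqref{thm3-1}) is, after splitting the integral over $[0,\Delta T]$ into $M$ pieces, precisely the sum of local quadrature errors propagated by $e^{-(M-1-j)\Delta t L_h}$ --- the same expression your telescoping would eventually produce. For the local error the paper applies the Lagrange interpolation remainder directly, with $M_q=\sup|P_hf^{(q)}|$ and $w_q(\theta)=\prod_i|\theta-c_i|$, rather than the $\varphi_j/\psi_j$ device or Theorem~2.7 of \cite{HochbruckOstermann2010}. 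Your telescoping framework is more general-purpose (it would survive an inexact homogeneous part or a nonlinear problem), but here it is unnecessary; moreover, the displayed telescoping identity needs repair --- the symbol $z_M^{\ast}$ is undefined and the sum as written does not actually telescope. The clean recursion $w_{l+1}-z_{l+1}=e^{-\Delta t L_h}(w_l-z_l)+\big(\mathcal{F}(z_l)-\mathcal{H}(z_l)\big)$, iterated from $w_0=z_0=v$, gives $w_M-z_M=\sum_{l=0}^{M-1}e^{-(M-1-l)\Delta t L_h}\big(\mathcal{F}(z_l)-\mathcal{H}(z_l)\big)$, and since the bracket is independent of $z_l$ this coincides with the paper's direct expansion.
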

\begin{proof}
	For any $1\leq n \leq N$, based on the Parareal-EIFE method proposed in Algorithm \ref{algorithm1}, we can derive by recurrence that
	\begin{align}
	\label{thm3-1}
	&\mathcal{F}^M(v;T_n,T_{n+1})-\mathcal{H}(v;T_n,T_{n+1})\notag\\
	=&\sum\limits_{j=0}^{M-1}e^{-\Delta t(M-1-j)L_h}\Delta t\sum\limits_{i=1}^q b_i(-\Delta tL_h)P_hf(T_n+(j+c_i)\Delta t)-\int_0^{\Delta T}e^{-(\Delta T-\theta)L_h}P_hf(T_n+\theta)\dd \theta.
	\end{align}
	
	Concerning about the regularity condition, since $t_{nM}=T_n$ and the definition of $b_i(-\tau_nL_h)$ in \eqref{eq3-12}, according to the Lagrangian interpolation theorem, we can derive
	\begin{align}
	\label{thm3-2}
	&\|\mathcal{F}^M(v;T_n,T_{n+1})-\mathcal{H}(v;T_n,T_{n+1})\|_0\notag\\
	=&\Big\|\sum\limits_{j=0}^{M-1}e^{-\Delta t(M-1-j)L_h}\Delta t\sum\limits_{i=1}^qP_hf(T_n+(j+c_i)\Delta t)\int_0^1 e^{-\Delta t(1-\theta )L_h}l_i(\theta)\dd \theta \notag\\
	&-\Delta t\sum\limits_{j=0}^{M-1}e^{-(M-j-1)\Delta tL_h}\int_0^1 e^{-\Delta t(1-\eta)L_h}P_hf(T_n+(j+\eta )\Delta t)\dd \eta\Big\|\notag\\
	\leq & \Delta t\sup\limits_{0 \leq j\leq M-1}\|e^{-(M-j-1)\Delta tL_h}\|_0\cdot \sup\limits_{0 \leq \eta\leq 1}\|e^{-\Delta t(1-\eta)L_h}\|_0 \sum\limits_{j=0}^{M-1}\int_0^{\Delta t}\frac{M_q}{q!}w_q(\theta)\dd \theta,
	\end{align}
	where $M_q = \sup\limits_{t_0 \leq t \leq t_0+T} |P_hf^{(q)}(t)|$ and $w_q(\theta)=\Big|\prod\limits_{i=1}^q (\theta-c_i)\Big|$ is the Lagrange polynomial of order $q$. According to \eqref{eq1-8} in Lemma \ref{lemma1}, by direct manipulation, we can easily reformulate \eqref{thm3-2} to
	\begin{equation*}
	\|\mathcal{F}^M(v;T_n,T_{n+1})-\mathcal{H}(v;T_n,T_{n+1})\|_0 \lesssim \Delta T (\Delta t)^q.
	\end{equation*}

	In conclusion, EIFE-linear method can fulfill the corresponding regularity condition, then we can know that EIFE-linear is proper to be applied to fine grids.
	
\end{proof}

\begin{theorem}
	\label{thm4}
	Assume that $f(t)$ fulfills Assumption \ref{assumption2} for some positive integer $p$ for linear problem. Suppose that EIFE method is unconditionally stable and chosen to be applied to coarse grids with $p$ different interpolation nodes $c_1, \cdots, c_p \in [0, 1]$. If $v(t)$, $w(t)$, $\xi(t)$ are sufficiently smooth with respect to $t$, then 
	\begin{equation*}
	\begin{split}
	&\max\limits_{0 \leq n \leq N-1}\|\mathcal{G}(v;T_n,T_{n+1})-\mathcal{H}(v;T_n,T_{n+1})\|_{0} \lesssim (\Delta T)^{p+1},\\
	&\|\mathcal{G}(v;T_n,T_{n+1})-\mathcal{G}(w;T_n,T_{n+1})\|_{0} - \|v-w\|_0 \lesssim \Delta T\|v-w\|_{0},\\
	&\|\mathcal{G}(v;T_n,T_{n+1})-\mathcal{G}(w;T_n,T_{n+1})+\mathcal{H}(w;T_n,T_{n+1})-\mathcal{H}(\xi;T_n,T_{n+1})\|_0\\
	&-\|v-\xi\|_0 \lesssim \Delta T\|v-\xi\|_0 + (\Delta T)^{p+1}\|w-\xi\|_0
	\end{split}
	\end{equation*}
	are fulfilled and the hidden constants are all independent of $\Delta T, v, w, \xi$, which means that EIFE-linear method is proper to be applied to coarse grids.
\end{theorem}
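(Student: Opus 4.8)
The plan is to exploit the fact that, in the linear setting, both propagators are \emph{affine} in the initial datum, with the \emph{same} linear part. First I would record the two representations. By \eqref{eq1-6}, the accurate propagator $\mathcal{H}$ is the exact flow of the semi-discrete problem \eqref{eq1-5}, so
\begin{equation*}
\mathcal{H}(v;T_n,T_{n+1}) = e^{-\Delta T L_h}v + \int_0^{\Delta T} e^{-(\Delta T-\theta)L_h}P_h f(T_n+\theta)\dd\theta,
\end{equation*}
whereas $\mathcal{G}(v;T_n,T_{n+1}) = e^{-\Delta T L_h}v + \Delta T\sum_{i=1}^p b_i(-\Delta T L_h)P_h f(T_n+c_i\Delta T)$. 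I would also note that, since $a(\cdot,\cdot)$ is symmetric and positive semidefinite, $L_h$ is self-adjoint and nonnegative on $(V_h,\|\cdot\|_0)$, so that $e^{-\tau L_h}$ is a contraction, $\|e^{-\tau L_h}\|_0\le 1$ for all $\tau\ge 0$ (a sharpening of, and consistent with, Lemma \ref{lemma1}).

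For the first (consistency) estimate I would subtract the two representations: the linear parts cancel, so $\mathcal{G}(v;T_n,T_{n+1})-\mathcal{H}(v;T_n,T_{n+1})$ is independent of $v$ and equals the interpolatory quadrature error for the nodes $c_1,\dots,c_p$. Writing each $b_i$ via \eqref{eq3-12} and rescaling $\theta=\eta\Delta T$ in the integral term, this difference becomes
\begin{equation*}
\Delta T\int_0^1 e^{-\Delta T(1-\theta)L_h}\Big[\sum_{i=1}^p l_i(\theta)\,P_h f(T_n+c_i\Delta T) - P_h f(T_n+\theta\Delta T)\Big]\dd\theta .
\end{equation*}
The bracket is minus the Lagrange interpolation remainder of $\eta\mapsto P_h f(T_n+\eta\Delta T)$ at the nodes $c_i$; by the standard remainder bound (exactly as in the proof of Theorem \ref{thm3}) it is $\le \frac{(\Delta T)^p}{p!}M_p\,w_p(\theta)$ with $M_p=\sup_t\|(P_h f)^{(p)}(t)\|_0$ and $w_p(\theta)=\big|\prod_{i=1}^p(\theta-c_i)\big|$. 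Using $\|e^{-\tau L_h}\|_0\le 1$, $M_p\lesssim 1$ (Assumption \ref{assumption2} and the $L^2$-stability of $P_h$), and $\int_0^1 w_p(\theta)\dd\theta\lesssim 1$, integration yields $\|\mathcal{G}(v;T_n,T_{n+1})-\mathcal{H}(v;T_n,T_{n+1})\|_0\lesssim (\Delta T)^{p+1}$. This is nothing but the computation of Theorem \ref{thm3} specialized to a single coarse step ($M=1$, step $\Delta T$, $p$ nodes).

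For the two Lipschitz-type estimates the affine structure does all the work. Since $\mathcal{G}(v;T_n,T_{n+1})-\mathcal{G}(w;T_n,T_{n+1})=e^{-\Delta T L_h}(v-w)$ and $\mathcal{H}(w;T_n,T_{n+1})-\mathcal{H}(\xi;T_n,T_{n+1})=e^{-\Delta T L_h}(w-\xi)$, contractivity gives $\|\mathcal{G}(v;T_n,T_{n+1})-\mathcal{G}(w;T_n,T_{n+1})\|_0\le\|v-w\|_0$, so the left-hand side of the second estimate is $\le 0\le\Delta T\|v-w\|_0$. For the third, the expression inside the norm telescopes to $e^{-\Delta T L_h}(v-w)+e^{-\Delta T L_h}(w-\xi)=e^{-\Delta T L_h}(v-\xi)$, whose norm is $\le\|v-\xi\|_0$; hence its left-hand side is again $\le 0$, trivially dominated by the nonnegative right-hand side.

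I expect the only genuine work — hence the ``main obstacle'', and a mild one — to be the first estimate: organizing the Lagrange remainder cleanly and verifying $M_p\lesssim 1$ from Assumption \ref{assumption2} and the stability of $P_h$. The remaining two bounds are immediate from linearity of the scheme together with the contraction property $\|e^{-\tau L_h}\|_0\le 1$, which is why no smoothness of $v,w,\xi$ (beyond membership in $V_h$) is actually needed in the linear case.
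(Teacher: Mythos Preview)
Your proposal is correct and follows essentially the same route as the paper: the consistency bound is exactly the $M=1$ case of Theorem~\ref{thm3} (Lagrange interpolation remainder), and both Lipschitz estimates reduce to the identity $\mathcal{G}(v)-\mathcal{G}(w)=\mathcal{H}(v)-\mathcal{H}(w)=e^{-\Delta T L_h}(v-w)$ together with the contractivity $\|e^{-\Delta T L_h}\|_0\le 1$. Your treatment of the two Lipschitz conditions is in fact slightly cleaner than the paper's --- you go straight to ``left-hand side $\le 0$'', whereas the paper takes a brief detour through $\|e^{-\Delta T L_h}\|_0-1=e^{-\Delta T\alpha}-1\lesssim\Delta T$ via a Taylor-expansion remark before arriving at the same nonpositivity.
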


\begin{proof}
	Following the similar arguments for deriving Theorem \ref{thm3}, we can easily derive that the coarse propagator $\mathcal{G}$ can fulfill the regularity condition when fixing $M=1$. So the corresponding details will be omitted.
	
	Considering about the Lipschitz continuous condition \eqref{assumption_eq3}, by using \eqref{eq1-8} in Lemma \ref{lemma1},
	\begin{align}
	\label{eq3-4}
	&\|\mathcal{G}(v;T_n,T_{n+1})-\mathcal{G}(w;T_n,T_{n+1})\|_0-\|v-w\|_0\nonumber\\
	=&\Big\|e^{-\Delta TL_h}v+\Delta T\sum\limits_{i=1}^p b_i(-\Delta TL_h)P_hf(T_n+c_i\Delta T)-e^{-\Delta TL_h}w\nonumber\\
	&-\Delta T\sum\limits_{i=1}^p b_i(-\Delta TL_h)P_hf(T_n+c_i\Delta T)\Big\|_0-\|v-w\|_0\nonumber\\
	=& (\|e^{-\Delta TL_h}\|_0-1)\|v-w\|_0.
	\end{align}
	
	As for the estimation of $\|e^{-\Delta TL_h}\|_0-1$, since $L_h$ is symmetric and positive definite, according to Taylor expansion,
	\begin{equation*}
	\begin{split}
	\|e^{-\Delta T_nL_h}\|_0-1 &= \sup\limits_{\lambda \in \lambda(L_h)} e^{-\Delta T\lambda}-1\\
	&\leq e^{-\Delta T\alpha}-1 \lesssim \Delta T.
	\end{split}
	\end{equation*}
	
	Therefore, the estimation of \eqref{eq3-4} becomes
	\begin{equation}
	\label{eq3-6}
	\|\mathcal{G}(v;T_n,T_{n+1})-\mathcal{G}(w;T_n,T_{n+1})\|_0-\|v-w\|_0 \lesssim \Delta T\|v-w\|_0,
	\end{equation}
	which implies that EIFE-linear method fulfills the Lipchitz continuous condition \eqref{assumption_eq3}.
	
	In the same way, the Lipschitz continuous condition \eqref{assumption_eq4} can also be easily derived. Therefore, we can conclude that EIFE-linear is proper to be applied to coarse grids.
	
\end{proof}

\begin{remark}
	When choosing EIFE-linear scheme to both coarse grids and fine grids, it's essential to balance the computational cost and numerical accuracy simultaneously. Generally speaking, we always select a less precise EIFE scheme as the coarse propagator $\mathcal{G}$ while a more precise one as the fine propagator $\mathcal{F}$, which implies that the value of $p$ is usually less than $q$.
\end{remark}

\begin{theorem}
	\label{thm5}
	Suppose that conditions in Assumption \ref{assumption1} are all fulfilled. Then the error bound of Parareal algorithm can reach to the accuracy of applying $\mathcal{F}$ to the fine grids, which implies that the numerical solution $\big\{U_{h}^{n,(k)}\big\}$ produced by (\ref{Parareal}) satisfies
	\begin{equation*}
	\big\|U_{h}^{n,(k)} - u_h(T_n)\big\|_{0} \lesssim (\Delta t) ^{q}
	\end{equation*}
	when $k \geq N$ and the hidden constant is independent of $h$, $\Delta t$ and $\Delta T$.
\end{theorem}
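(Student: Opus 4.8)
The plan is to prove the estimate by a two-step argument: first control the Parareal iteration error against a reference ``exact-in-time'' propagation using the classical superlinear convergence analysis for Parareal (as in \cite{GanderVandewalle2007, Guillaume2007}), and then invoke the regularity of the fine propagator from Assumption \ref{assumption1}(1) to pass from the reference solution to $u_h(T_n)$. Concretely, introduce the exact semi-discrete flow $\mathcal{H}$, so that $u_h(T_{n+1}) = \mathcal{H}(u_h(T_n);T_n,T_{n+1})$, and write the iteration error $e_n^{(k)} := \|U_h^{n,(k)} - u_h(T_n)\|_0$. Subtracting the identity $u_h(T_{n+1}) = \mathcal{G}(u_h(T_n);T_n,T_{n+1}) + \mathcal{H}(u_h(T_n);T_n,T_{n+1}) - \mathcal{G}(u_h(T_n);T_n,T_{n+1})$ from the Parareal update \eqref{Parareal} gives
\begin{equation*}
U_h^{n+1,(k+1)} - u_h(T_{n+1}) = \big[\mathcal{G}(U_h^{n,(k+1)}) - \mathcal{G}(u_h(T_n))\big] + \big[\overline{U}_h^{n+1,(k+1)} - \mathcal{H}(U_h^{n,(k)})\big] - \big[\mathcal{G}(U_h^{n,(k)}) - \mathcal{G}(u_h(T_n))\big],
\end{equation*}
where I suppress the common time arguments $T_n,T_{n+1}$. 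The middle bracket splits as $\overline{U}_h^{n+1,(k+1)} - \mathcal{H}(U_h^{n,(k)}) = [\mathcal{F}^M(U_h^{n,(k)}) - \mathcal{H}(U_h^{n,(k)})]$, which is $\lesssim (\Delta t)^{q+1}$ by \eqref{assumption_eq1}, uniformly in the starting value.

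Next I would set up the recursion on $e_n^{(k)}$. Group the first and third brackets together with the $\mathcal{H}$-difference: add and subtract $\mathcal{H}(U_h^{n,(k)}) - \mathcal{H}(u_h(T_n))$ so that the combination $\mathcal{G}(U_h^{n,(k+1)}) - \mathcal{G}(u_h(T_n)) + \mathcal{H}(u_h(T_n)) - \mathcal{H}(U_h^{n,(k)})$ appears and is estimated by \eqref{assumption_eq4} as
\begin{equation*}
\big\| \mathcal{G}(U_h^{n,(k+1)}) - \mathcal{G}(u_h(T_n)) + \mathcal{H}(u_h(T_n)) - \mathcal{H}(U_h^{n,(k)}) \big\|_0 \le \|U_h^{n,(k+1)} - U_h^{n,(k)}\|_0 + C\Delta T\, e_{n}^{(k+1)} + C(\Delta T)^{p+1} e_n^{(k)};
\end{equation*}
the leftover piece $\mathcal{G}(U_h^{n,(k)}) - \mathcal{G}(u_h(T_n)) + \mathcal{H}(u_h(T_n)) - \mathcal{H}(U_h^{n,(k)})$ is handled the same way, and $\|U_h^{n,(k+1)}-U_h^{n,(k)}\|_0 \le e_n^{(k+1)} + e_n^{(k)}$ by the triangle inequality. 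Collecting terms and using $U_h^{0,(k)} = u_h(T_0)$ (so $e_0^{(k)} = 0$ for all $k$), I obtain a recursion of the schematic form
\begin{equation*}
e_{n+1}^{(k+1)} \le (1 + C\Delta T)\, e_n^{(k+1)} + C\Delta T\, e_n^{(k)} + C(\Delta t)^{q+1},
\end{equation*}
valid for $n = 0,\dots,N-1$ and $k \ge 0$, together with the ``level-zero'' bound $e_n^{(0)} \lesssim (\Delta T)^{p+1}$ coming from applying \eqref{assumption_eq2} and \eqref{assumption_eq3} along the coarse sweep in Step 2 of Algorithm \ref{algorithm1}.

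The remaining work is purely to solve this linear two-index recursion and extract the decay in $k$. Iterating in $n$ picks up a factor $(1+C\Delta T)^n \le e^{CN\Delta T} = e^{CT}$, which is a harmless constant; the essential mechanism is that each Parareal iteration gains an extra power of $\Delta T$ on the ``old'' error $e_n^{(k)}$, so that after $k$ steps one has, roughly, $\max_n e_n^{(k)} \lesssim (\Delta T)^k \cdot (\text{level-zero error}) + (\text{accumulated fine-propagator error})$. A clean way to make this precise is to prove by induction on $k$ a bound of the form $\max_{0\le n\le N} e_n^{(k)} \le C_k\big[(\Delta t)^q + (\Delta T)^{p+1+k}\big]$ using the recursion (the $(\Delta t)^{q+1}$ source term summed over at most $N = T/\Delta T$ coarse intervals produces the stated $(\Delta t)^{q}$, after absorbing one factor $\Delta t \le \Delta T$). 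Once $k \ge N$, the factor $(\Delta T)^{p+1+k} \le (\Delta T)^{p+1+N}$ is dominated by $(\Delta t)^q$ for the relevant range of step sizes (here one uses $\Delta t = \Delta T/M$ and that $p+1+N$ exceeds the order one needs), leaving $\max_{0\le n\le N} e_n^{(N)} \lesssim (\Delta t)^q$, which is the claim.

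I expect the main obstacle to be the bookkeeping in the induction on $k$: one must track how the two error contributions — the geometrically decaying term $(\Delta T)^{p+1+k}$ from the coarse initialization and the saturating term from the fine-propagator defect — propagate simultaneously through the recursion without the constants $C_k$ blowing up faster than $(\Delta T)^{-k}$. This is exactly the delicate point in the Parareal convergence proofs of \cite{GanderVandewalle2007, Guillaume2007}, and the role of the refined Lipschitz bound \eqref{assumption_eq4} (as opposed to a plain Lipschitz estimate) is precisely to ensure that the coefficient multiplying $e_n^{(k)}$ carries the extra $\Delta T$ that drives the geometric decay; verifying that our $\mathcal{G}$ and $\mathcal{H}$ genuinely satisfy \eqref{assumption_eq4} has already been done in Theorem \ref{thm4}, so here it can be used as a black box.
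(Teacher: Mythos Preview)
Your overall plan is the right one, but two concrete steps do not go through as written.

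First, the error decomposition: you apply \eqref{assumption_eq4} with $v=U_h^{n,(k+1)}$, $w=u_h(T_n)$, $\xi=U_h^{n,(k)}$, which puts $\|v-\xi\|_0=\|U_h^{n,(k+1)}-U_h^{n,(k)}\|_0$ on the right and forces you to bound it by $e_n^{(k+1)}+e_n^{(k)}$ via the triangle inequality. That last step injects a coefficient of order $1$ (not $C\Delta T$) in front of $e_n^{(k)}$, so the recursion your derivation actually yields is $e_{n+1}^{(k+1)}\le (1+C\Delta T)\,e_n^{(k+1)}+(1+C\Delta T)\,e_n^{(k)}+\cdots$, which gives no decay in $k$ at all and does not match your stated ``schematic'' recursion. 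The paper avoids this by grouping differently: it writes $U_h^{n,(k)}-u_h(T_n)=I_1+I_2$ with $I_2=\mathcal{G}(U_h^{n-1,(k)})-\mathcal{G}(U_h^{n-1,(k-1)})+\mathcal{H}(U_h^{n-1,(k-1)})-\mathcal{H}(u_h(T_{n-1}))$ and applies \eqref{assumption_eq4} with $v=U_h^{n-1,(k)}$, $w=U_h^{n-1,(k-1)}$, $\xi=u_h(T_{n-1})$. Then $\|v-\xi\|_0=e_{n-1}^{(k)}$ appears directly, no triangle inequality is needed, and the coefficient on $e_{n-1}^{(k-1)}$ is $C(\Delta T)^{p+1}$, yielding the clean recursion $\tilde e_n^{(k)}\le\alpha\,\tilde e_{n-1}^{(k)}+\beta\,\tilde e_{n-1}^{(k-1)}+\gamma$ with $\alpha-1\eqsim\Delta T$, $\beta\eqsim(\Delta T)^{p+1}$, $\gamma\eqsim\Delta T(\Delta t)^q$.

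Second, even granting the correct recursion, your endgame---arguing that for $k\ge N$ the residual $(\Delta T)^{p+1+k}$ is ``dominated by $(\Delta t)^q$ for the relevant range of step sizes''---cannot hold with a constant independent of $\Delta T$ and $\Delta t$: fix $\Delta T$ and send $M\to\infty$ (so $\Delta t\to 0$), and the former is a fixed positive number while the latter vanishes. The paper takes a different route: it passes to the majorizing sequence satisfying the recursion with equality, forms the generating function $\rho^{(k)}(\tau)=\sum_{n\ge 1} e_n^{(k)}\tau^n$, and solves it in closed form. The key structural observation is that the contribution of the coarse initial error enters $\rho^{(k)}$ only through a term carrying the factor $\tau^{k+1}$; hence for every $n\le k$ its coefficient of $\tau^n$ is exactly zero, and only the fine-propagator source $\gamma$ survives, giving $e_n^{(k)}\le n(\alpha+\beta)^{n-1}\gamma\lesssim(\Delta t)^q$ for $k\ge n$. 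This is the finite-termination property of Parareal, which an induction on $k$ with a running bound $C_k[(\Delta t)^q+(\Delta T)^{p+1+k}]$ does not capture.
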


\begin{proof}
	
	Step 1: According to format of Parareal algorithm (see Algorithm \ref{algorithm1}), on the one hand, when $k \geq 1$,
	\begin{equation*}
	\label{the_error}
	\begin{split}
	U_{h}^{n,(k)}-u_h(T_n)
	&=\mathcal{G}(U_{h}^{n-1,(k)};T_{n-1},T_n)+\mathcal{F}^M(U_{h}^{n-1,(k-1)};T_{n-1},T_n)
	-\mathcal{G}(U_{h}^{n-1,(k-1)};T_{n-1},T_n)\\
	&-\mathcal{H}(u_h(T_{n-1});T_{n-1},T_n)=:  {\rm I}_1 + {\rm I}_2,
	\end{split}
	\end{equation*}
	where
	\begin{equation*}
	\begin{split}
	{\rm I}_1 &:= \mathcal{F}^M(U_{h}^{n-1,(k-1)};T_{n-1},T_n)-\mathcal{H}(U_{h}^{n-1, (k-1)};T_{n-1},T_n),\\
	{\rm I}_2 &:= \mathcal{G}(U_{h}^{n-1,(k)};T_{n-1},T_n) -\mathcal{G}(U_{h}^{n-1,(k-1)};T_{n-1},T_n) + \mathcal{H}(U_{h}^{n-1,(k-1)};T_{n-1},T_{n}) - \mathcal{H}(u_h(T_{n-1});T_{n-1},T_n).
	\end{split}
	\end{equation*}
	
	Then owing to the conditions (\ref{assumption_eq1}) and (\ref{assumption_eq4}) are fulfilled, we can derive that
	\begin{equation}
	\label{error1}
	\begin{split}
	&\big\|U_{h}^{n,(k)}-u_h(T_n)\big\|_{0} - \big\|U_{h}^{n-1,(k)}-u_h(T_{n-1})\big\|_0\\
	\lesssim & (\Delta T)^{p+1}\big\|U_{h}^{n-1,(k-1)}-u_h(T_{n-1})\big\|_0+\Delta T\big\|U_{h}^{n-1,(k)}-u_h(T_{n-1})\big\|_0+\Delta T(\Delta t)^{q},\quad k \geq 1.
	\end{split}
	\end{equation}
	
	On the other hand, when $k=0$, then
	\begin{equation*}
	\begin{split}
	U_{h}^{n,(0)}-u_h(T_n)=& \mathcal{G}(U_{h}^{n-1,(0)};T_{n-1},T_n)-\mathcal{H}(u_h(T_{n-1});T_{n-1},T_n)\\
	=& \Big(\mathcal{G}(U_{h}^{n-1,(0)};T_{n-1},T_n)-\mathcal{G}(u_h(T_{n-1});T_{n-1},T_n)\Big) \\
	&+ \Big(\mathcal{G}(u_h(T_{n-1});T_{n-1},T_n)-\mathcal{H}(u_h(T_{n-1});T_{n-1},T_n)\Big).
	\end{split}
	\end{equation*}
	
	Therefore, noting the conditions \eqref{assumption_eq2} and \eqref{assumption_eq3}, we can derive that
	\begin{equation}
	\label{k=0}
	\begin{split}
	\big\|U_{h}^{n,(0)}-u_h(T_n)\big\|_{0}-\big\|U_{h}^{n-1,(0)}-u_h(T_{n-1})\big\|_{0}
	\lesssim \Delta T\big\|U_{h}^{n-1,(0)}-u_h(T_{n-1})\big\|_{0}+(\Delta T)^{p+1}.
	\end{split}
	\end{equation}
	
	Define 
	\begin{equation*}
	\begin{split}
	\alpha-1 \eqsim \Delta T, \quad \beta \eqsim (\Delta T)^{p+1}, \quad \gamma \eqsim \Delta T(\Delta t)^{q},\quad \delta \eqsim (\Delta T)^{p+1},
	\end{split}
	\end{equation*}
	where the hidden constants are positive and independent of $\Delta t, \Delta T$. Denote $\tilde{e}_n^{(k)}=\big\|U_{h}^{n,(k)}-u_h(T_n)\big\|_{0}$, then (\ref{error1}) and (\ref{k=0}) can be reformulated as:
	\begin{equation}
	\label{error2}
	\left\{
	\begin{split}
	\tilde{e}_n^{(k)}&\leq \alpha \tilde{e}_{n-1}^{(k)}+\beta \tilde{e}_{n-1}^{(k-1)}+\gamma, \\
	\tilde{e}_n^{(0)}&\leq \alpha \tilde{e}_{n-1}^{(0)} +\delta.
	\end{split}
	\right.
	\end{equation}

	Step 2: Let $e_n^{(k)}$ as intermediate variables for comparison, which satisfy
	\begin{equation}
	\label{compare_eq}
	\left\{\begin{split}
	e_n^{(k)}&=\alpha e_{n-1}^{(k)}+\beta e_{n-1}^{(k-1)}+\gamma, \\
	e_n^{(0)}&=\alpha e_{n-1}^{(0)}+\delta.
	\end{split} \right.
	\end{equation}
	
	Define $\rho^{(k)}(\tau)=\sum\limits_{n=1}^{\infty}e_n^{(k)}\tau^n$ with $\tau$ sufficiently small. Since $e_0^{(k)}=\tilde{e}_0^{(k)}=0$, (\ref{compare_eq}) can be converted to
	\begin{equation*}
	\label{compare_eq3}
	\left\{
	\begin{split}
	\rho^{(k)}(\tau) &= \alpha \tau \rho^{(k)}(\tau) +\beta \tau \rho^{(k-1)}(\tau)+ \frac{\gamma\tau}{1-\tau},\\
	\rho^{(0)}(\tau) &= \alpha \tau \rho^{(0)}(\tau) + \frac{\delta \tau}{1-\tau}.
	\end{split}
	\right.
	\end{equation*}
	
	After simple manipulations, we can obtain that
	\begin{equation*}
	\left\{
	\begin{split}
	&\rho^{(0)}(\tau) = \frac{\delta \tau}{(1-\alpha \tau)(1-\tau)},\\
	&\rho^{(k)}(\tau) = \Big(\frac{\beta \tau}{1-\alpha \tau}\Big)^k\rho^{(0)}(\tau)+ \Big(1-\Big(\frac{\beta \tau}{1-\alpha \tau}\Big)^k\Big)\frac{\gamma\tau}{(1-\tau)(1-(\alpha+\beta)\tau)}.
	\end{split}
	\right.
	\end{equation*}
	
	Since $\alpha>1$, $\alpha+\beta>1$ and $\tau > 0$ is sufficiently small, after direct manipulation, we can derive that
	\begin{subequations}
		\begin{align}
		\label{mid2}
		\frac{\gamma \tau}{(1-\tau)(1-(\alpha+\beta)\tau)}\Big(1-\Big( \frac{\beta\tau}{1-\alpha\tau}\Big)^k\Big)&\leq \frac{\gamma\tau}{(1-(\alpha+\beta)\tau)^2},\\
		\label{mid3}
		\Big(\frac{\beta \tau}{1-\alpha \tau} \Big)^k\rho^{(0)}(\tau) & \leq \frac{\beta^k\delta \tau^{k+1}}{(1-\alpha\tau)^{k+2}}.
		\end{align}
	\end{subequations}	
	
	Following the equality \eqref{mid2}-\eqref{mid3}, we can derive that	
	\begin{equation}
	\label{scaling}
	\begin{split}
	\rho^{(k)}(\tau) \leq \frac{\beta^k \delta \tau^{k+1}}{(1-\alpha \tau)^{k+2}}+\frac{\gamma\tau }{(1-(\alpha+\beta)\tau)^2}.
	\end{split}
	\end{equation}
	
	According to Taylor expansion, then (\ref{scaling}) is transformed as
	\begin{equation*}
	\label{Taylor}
	\begin{split}
	\rho^{(k)}(\tau) &= \sum\limits_{n=1}^{\infty}e_n^{(k)}\tau^n \leq \beta^k\delta\tau^{k+1}\sum\limits_{j=0}^{\infty}C_{k+1+j}^{j}\alpha^j \tau^j+\gamma \sum\limits_{j=0}^{\infty}C_{1+j}^j(\alpha+\beta)^j\tau^{j+1}.
	\end{split}
	\end{equation*}
	
	Therefore, if $k \geq n$, we can obtain that
	\[e_n^{(k)}\leq C_{n}^{n-1}(\alpha+\beta)^{n-1}\gamma. \]
	It's obvious that $\tilde{e}_n^{(k)}\leq e_n^{(k)}$, thus $\tilde{e}_n^{(k)}\leq C_{n}^{n-1}(\alpha+\beta)^{n-1}\gamma$.
	
	Especially, if choosing $n = N$, which is $k \geq N$, then
	\begin{equation*}
	\label{k_greater_n2}
	\begin{split}
	\tilde{e}_n^{(k)} &\leq C_{n}^{n-1}(\alpha+\beta)^{n-1}\gamma\\
	&\lesssim n(1+C\Delta T)^{n-1}\Delta T(\Delta t)^{q}\\
	&\lesssim (\Delta t)^{q}.
	\end{split}
	\end{equation*}
	Therefore, when $N$ is fixed, the error bound of coarse grids can achieve the accuracy of fine grids in finite steps.
	
\end{proof}

\begin{remark}
	In reality, for solving large-scale problems, Parareal method can achieve great accuracy when the iteration times is much smaller than the number of processors, i.e. $k \ll N$. Moreover, the number of processors can not be too large, since the cost of communication and sequential correction will increase with the number of processors.
\end{remark}

\begin{theorem}\label{thm2}
	Suppose that the function $f$ satisfies Assumption \ref{assumption2} and the exact solution $u(t)$ satisfies the regularity condition \eqref{assumption3-1} and \eqref{assumption3-2} in Assumption \ref{assumption3}. Assume that EIFE-linear method is chosen to both coarse grids and fine grids with $p$ and $q$ different interpolation nodes in $[0, 1]$ respectively ($p \leq q, q \geq 1$). Then the numerical solution $\{U_{h}^{n,(k)}\}$ produced by PEIFE-linear method satisfies
	\begin{equation}\label{fully_error}
	\big\|u(T_n)-U_{h}^{n,(k)}\big\|_0 \lesssim (\Delta t)^q + h^2,
	\end{equation}
	for $k \geq N$, and the hidden constant is independent of $h$ and $\Delta t$.
\end{theorem}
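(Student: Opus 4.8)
The plan is to decompose the total error via the triangle inequality, exactly in the spirit of \eqref{all_error} but at the coarse node $T_n=t_{nM}$:
\begin{equation*}
\big\|u(T_n)-U_{h}^{n,(k)}\big\|_0 \leq \|u(T_n)-u_h(T_n)\|_0 + \big\|u_h(T_n)-U_{h}^{n,(k)}\big\|_0 .
\end{equation*}
The first term is the pure space-discretization error and is controlled immediately by \eqref{semidiscrete_error} (a direct consequence of Lemma \ref{lemma2} under Assumption \ref{assumption3}), giving $\|u(T_n)-u_h(T_n)\|_0 \lesssim h^2$ with hidden constant independent of $h$; here it is essential that $T_n$ is one of the temporal grid points, so that \eqref{semidiscrete_error} may be invoked at $t=T_n$. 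Thus the whole task reduces to bounding the second term, which is precisely the Parareal time-integration error $\|u_h(T_n)-U_{h}^{n,(k)}\|_0$ for the semi-discrete ODE system \eqref{eq1-5}.

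For that term the strategy is to quote the abstract convergence result Theorem \ref{thm5}, whose hypothesis is Assumption \ref{assumption1}, and to verify that the concrete choices $\mathcal{G}=$ EIFE-linear with $p$ stages and $\mathcal{F}=$ EIFE-linear with $q$ stages fulfil that assumption. First I would check the fine-propagator regularity bound \eqref{assumption_eq1}: this is supplied by Theorem \ref{thm3}, which in fact delivers the sharper estimate $\|\mathcal{F}^M(v;T_n,T_{n+1})-\mathcal{H}(v;T_n,T_{n+1})\|_0 \lesssim \Delta T(\Delta t)^{q}$ with constant independent of the starting value $v$, and this is exactly the quantity $\gamma$ entering the recursion in the proof of Theorem \ref{thm5}. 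Next, the coarse-propagator regularity bound \eqref{assumption_eq2} and the Lipschitz-type conditions \eqref{assumption_eq3}--\eqref{assumption_eq4} are all furnished by Theorem \ref{thm4}, whose proof rests on the unconditional stability of EIFE-linear (Theorem \ref{thm1} and the remark following it) together with the semigroup estimates \eqref{eq1-8} of Lemma \ref{lemma1}. All of these ingredients need only Assumption \ref{assumption2} on $f$ (with $r\ge\max\{p,q\}$); the ``sufficient smoothness'' of $v,w,\xi$ required in Theorems \ref{thm3}--\ref{thm4} is harmless because $u_h$ solves a linear ODE with smooth data and, more to the point, the constants in those theorems do not depend on the starting values at all.

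With Assumption \ref{assumption1} verified, Theorem \ref{thm5} applies directly and yields $\|u_h(T_n)-U_{h}^{n,(k)}\|_0 \lesssim (\Delta t)^{q}$ for $k\ge N$, with hidden constant independent of $h,\Delta t,\Delta T$; adding this to the $h^2$ bound for the spatial part gives \eqref{fully_error}. The proof is therefore essentially an assembly of previously established results, and the only place that needs a little care — the nearest thing to an obstacle — is matching the hypotheses of Theorems \ref{thm3}, \ref{thm4} and \ref{thm5} precisely: one should observe that, since $N$ is fixed and $\Delta T=T/N$, the quantities $(\Delta T)^{p+1}$ and $\Delta T(\Delta t)^{q}$ appearing in the Parareal recursion are both genuinely small, and that the role of the condition $p\le q$ is only to make the coarse propagator cheaper than the fine one — it plays no part in the estimate itself, the final temporal rate being $(\Delta t)^q$ irrespective of $p$.
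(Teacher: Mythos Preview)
Your proposal is correct and follows exactly the paper's approach: the paper's proof of Theorem \ref{thm2} is the single sentence ``With the help of triangle inequality, the above conclusion can be immediately obtained by combining the results of Theorem \ref{thm1} and Theorem \ref{thm5},'' and your write-up is essentially a careful expansion of that sentence, correctly invoking \eqref{semidiscrete_error} for the $h^2$ spatial term and Theorems \ref{thm3}--\ref{thm4} to verify Assumption \ref{assumption1} so that Theorem \ref{thm5} applies for the $(\Delta t)^q$ temporal term.
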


With the help of triangle inequality, the above conclusion can be immediately obtained by combining the results of Theorem \ref{thm1} and Theorem \ref{thm5}.

\begin{remark}
	We can reformulate the model problem \eqref{eq1-1} with periodic boundary condition by transforming $v(t,x)=e^{-\alpha t}u(t,x), \alpha >0$, which leading the bilinear form \eqref{bilinear} coercive. In addition, the spatial mesh can also be relaxed to quasi-uniform ones while similar theoretical analysis still remains valid.
\end{remark}

\section{Numerical experiments and applications}\label{numerical}

In this section, several numerical examples and applications are presented to illustrate the performance of the proposed efficient PEIFE-linear method and verify the convergent order we have derived in Section \ref{linear_theory}. All numerical experiments are done using Matlab on an Intel i5-8250U, 1.80GHz CPU laptop with 8GB memory.

\subsection{Convergence tests}
We verify the error estimates obtained in Theorem \ref{thm2} for PEIFE-linear scheme, and the numerical errors $\big\|U_{h}^{n,(k)}-u(T_n)\big\|_0$ are evaluated at the terminal time $t_0+T$. Specially, we denote PEIFE-linear22 scheme, PEIFE-linear23 scheme and PEIFE-linear33 scheme as PEIFE-linear method \eqref{Parareal} with (p, q) = (2, 2), (2, 3) and (3, 3), respectively. Also, we denote EIFE-linear2 and EIFE-linear3 as EIFE-linear method \eqref{eq1-7} with 2 and 3 Runge-Kutta stages. For brevity, we denote the time steps $N_T$ as the result of $N\times M$ for all PEIFE-linear methods and also denote $\big\|U_{h}^{n,(k)}-u(T_n)\big\|_0$ as the numerical error for all EIFE-linear methods. All interpolation nodes are selected uniformly in [0, 1].

\begin{example}
	\label{ex5}
	In this example, we consider the following one-dimensional linear reaction-diffusion problem with homogeneous Dirichlet boudnary condition:
	\begin{equation*}
	\left\{
	\begin{split}
	&u_t = u_{xx}+(2+x(1-x))e^t, \quad x \in [0, 1], \quad 0 \leq t \leq 1,\\
	&u(0, x) = x(1-x), \quad x \in [0, 1], \\
	\end{split}
	\right.
	\end{equation*}
	The exact solution is given by $u(t, x) = x(1-x)e^t$.
\end{example}

On the one hand, for the spatial accuracy tests, we run the PEIFE-linear33 scheme with fixed time steps $N_T=N\times M=4\times 256$ (i.e., $\Delta T=T/N=1/4,\Delta t=\Delta T/M=1/1024$) and uniformly refined spatial meshes with $N_x=8,16,32$ and 64, respectively, so that the spatial mesh sizes are much coarser compared to the time step size. On the other hand, for the temporal accuracy tests, we fix the spatial meshes with $N_x=4096$ and run the EIFE-linear2, PEIFE-linear22 with uniform time step $N_T=N\times M=4\times 2, 4\times 4, 4\times 8, 4\times 16$ and EIFE-linear3, PEIFE-linear22, PEIFE-linear23 with $N_T=N\times M=4 \times 1, 4 \times 2, 4 \times 4, 4\times 8$, respectively. In order to verify the temporal accuracy of PEIFE-linear method can reach to that of EIFE-linear method when $k \geq N$, we always show the numerical results of PEIFE-linear method after iterating for 4 times. All numerical results are reported in Table \ref{tab 3-1}, including the solution errors at the terminal time measured in the $L^2$-norm and corresponding convergence rates. We can observe the second-order spatial convergence with respect to $L^2$-norm for all numerical schemes. It's also easy to find the second-order temporal convergence for the EIFE-linear2 and PEIFE-linear22 schemes, and third-order temporal convergence for the EIFE-linear3, PEIFE-linear23 and PEIFE-linear33 schemes in $L^2$-norm. What's more, the numerical accuracy of all PEIFE-linear methods depends on the corresponding fine propagator and can achieve the accuracy of applying EIFE-linear method sequentially with fine time step, which coincide very well with the error estimates derived in Theorem \ref{thm2}.

\begin{table}[ht]
	\centering
	\caption{Numerical results on the solution errors measured in the $L^2$-norm and corresponding convergence rates for the EIFE-linear2, EIFE-linear3, PEIFE-linear22, PEIFE-linear33 and PEIFE-linear23 schemes in Example \ref{ex5}}
	\begin{tabular}{|ccccc|}
		\hline
		Method & $N_T$ & $N_x$ & $\big\|U_{h}^{n,(4)}-u(t_n)\big\|_0$ & CR\\
		\hline
		\multicolumn{5}{|c|}{Spatial accuracy tests}\\
		\hline
		\multirow{4}{40pt}{PEIFE-linear33} & $4 \times 256$ & 8 & 7.3000e-03 & -\\
		& $4 \times 256$ & 16 & 1.8000e-03 & 2.02\\
		& $4 \times 256$ & 32 & 4.5453e-04 & 1.99\\
		& $4 \times 256$ & 64 & 1.1364e-04 & 2.00\\
		\hline
		\multicolumn{5}{|c|}{Temporal accuracy tests}\\
		\hline
		\multirow{4}{40pt}{EIFE-linear2} & 8 & 4096 & 5.2732e-04 & -\\
		& 16 & 4096 & 1.0660e-04 & 2.31\\
		& 32 & 4096 & 2.3429e-05 & 2.19\\
		& 64 & 4096 & 5.4701e-06 & 2.10\\
		\hline
		\multirow{4}{40pt}{EIFE-linear3} & 4 & 4096 & 7.2503e-05 & -\\
		& 8 & 4096 & 6.7772e-06 & 3.42\\
		& 16& 4096 & 7.2130e-07 & 3.23\\
		& 32& 4096 & 1.0250e-07 & 2.81\\
		\hline
		\multirow{4}{40pt}{PEIFE-linear22} & $4 \times 2$ & 4096 & 5.2732e-04 & -\\
		& $4\times 4$ & 4096 & 1.0660e-04 & 2.31\\
		& $4\times 8$ & 4096 & 2.3429e-05 & 2.19\\
		& $4\times 16$ & 4096 & 5.4701e-06 & 2.10\\
		\hline
		\multirow{4}{40pt}{PEIFE-linear23} & $4\times 1$ & 4096 & 7.2503e-05 & -\\
		& $4\times 2$ & 4096 & 6.7772e-06 & 3.42\\
		& $4\times 4$ & 4096 & 7.2130e-07 & 3.23\\
		& $4\times 8$ & 4096 & 1.0250e-07 & 2.81\\
		\hline
		\multirow{4}{40pt}{PEIFE-linear33} & $4\times 1$ & 4096 & 7.2503e-05 & -\\
		& $4\times 2$ & 4096 & 6.7772e-06 & 3.42\\
		& $4\times 4$ & 4096 & 7.2130e-07 & 3.23\\
		& $4\times 8$ & 4096 & 1.0250e-07 & 2.81\\		
		\hline
	\end{tabular}
	\label{tab 3-1}
\end{table}

\begin{example}\label{ex6}
	In this example, we consider the following two-dimension linear reaction-diffusion problem with homogeneous Dirichlet boundary condition, which is also studied in \cite{JuZhang2015}.
	\begin{equation*}
	\left\{ 
	\begin{split}
	&u_t = \Delta u + \pi^2 e^{-4\pi^2 t}\sin(\pi(x-\frac 1 4))\sin(2\pi(y-\frac 1 8)), \quad (x, y)\in \Omega\quad 0 \leq t \leq T,\\
	&u(0, x, y) = \sin(\pi(x-\frac 1 4))\sin(2\pi(y-\frac 1 8)), \quad (x, y) \in \Omega,\\
	\end{split}
	\right.
	\end{equation*}
	where $\Omega = [\frac 1 4, \frac 5 4] \times [\frac 1 8, \frac 5 8]$ and the terminal time $T=0.6$. The exact solution is given by $u(t, x, y) = e^{-4\pi^2 t}\sin(\pi(x-\frac 1 4))\sin(2\pi(y-\frac 1 8))$.
\end{example}

For the spatial accuracy tests, we run the PEIFE-linear33 scheme with fixed $N_T=N\times M=4\times 256$ and uniformly refined spatial meshes with $N_x \times N_y=8\times 4, 16 \times 8, 32 \times 16$ and $64 \times 32$, respectively, so that the spatial mesh sizes are much coarser than time step size. For the temporal accuracy tests, we fix the spatial meshes with $N_x \times N_y=2048\times 1024$ and run the EIFE-linear2, PEIFE-linear22 with uniform time step $N_T=N\times M=4\times 4, 4\times 8, 4 \times 16, 4\times 32$ and EIFE-linear3, PEIFE-linear23, PEIFE-linear33 with $N_T=N\times M=4\times 2, 4\times 4, 4\times 8, 4\times 16$, respectively. In order to verify the temporal accuracy of PEIFE-linear method can reach to that of EIFE-linear method when $k \geq N$, we always show the numerical results of PEIFE-linear method after iterating for 4 times. All numerical results are reported in Table \ref{tab 4-1}, including the solution errors at the terminal time measured in the $L^2$-norm and corresponding convergence rates. We still observe the roughly second-order spatial convergence in the $L^2$-norm. It is also seen that the temporal convergence is slightly higher than two for EIFE-linear2 and PEIFE-linear22 schemes and slightly higher than three for EIFE-linear3, PEIFE-linear23 and PEIFE-linear33 schemes, which basically matches the results for Theorem \ref{thm2}.

\begin{table}[htbp]
	\centering
	\caption{Numerical results on the solution errors measured in the $L^2$-norm and corresponding convergence rates for the EIFE-linear2, EIFE-linear3, PEIFE-linear22, PEIFE-linear23 and PEIFE-linear33 schemes in Example \ref{ex6}}
	\begin{tabular}{|ccccc|}
		\hline
		Method & $N_T$ & $N_x \times N_y$ & $\big\|U_{h}^{n,(4)}-u(t_n)\big\|_0$ & CR\\
		\hline
		\multicolumn{5}{|c|}{Spatial accuracy tests}\\
		\hline
		\multirow{4}{40pt}{PEIFE-linear33}& $4 \times 256$ & $8 \times 4$ & 3.3512e-12 & -\\
		& $4 \times 256$ & $16 \times 8$ & 9.5480e-13 & 1.81\\
		& $4 \times 256$ & $32 \times 16$ & 2.4738e-13 & 1.95\\
		& $4 \times 256$ & $64 \times 32$ & 6.2516e-14 & 1.98\\
		\hline
		\multicolumn{5}{|c|}{Temporal accuracy tests}\\
		\hline
		\multirow{4}{40pt}{EIFE-linear2}& 16& $2048 \times 1024$ & 4.0690e-12 & -\\
		&32 & $2048 \times 1024$ & 6.5336e-13 & 2.64\\
		&64 & $2048\times 1024$ & 1.3075e-13 & 2.32\\
		&128 & $2048 \times 1024$ & 2.9349e-14 & 2.16\\
		\hline
		\multirow{4}{40pt}{EIFE-linear3} & 8 & $2048 \times 1024$ & 1.1463e-11 & -\\
		& 16 & $2048\times 1024$ & 6.0057e-13 & 4.25\\
		&32 & $2048 \times 1024$ & 4.9785e-14 & 3.59\\
		&64 & $2048\times 1024$ & 4.9332e-15 & 3.36\\
		\hline
		\multirow{4}{40pt}{PEIFE-linear22} & $4\times 4$ & $2048 \times 1024$ & 4.0690e-12 & -\\
		& $4 \times 8$ & $2048 \times 1024$ & 6.5336e-13 & 2.64\\
		& $4 \times 16$ & $2048 \times 1024$ & 1.3075e-13 & 2.32\\
		& $4 \times 32$ & $2048 \times 1024$ & 2.9349e-14 & 2.16\\
		\hline
		\multirow{4}{40pt}{PEIFE-linear23} & $4 \times 2$ & $2048 \times 1024$ & 1.1463e-11 & -\\
		& $4 \times 4$ & $2048 \times 1024$ & 6.0057e-13 & 4.25\\
		& $4 \times 8$ & $2048 \times 1024$ & 4.9785e-14 & 3.59\\
		& $4 \times 16$ & $2048 \times 1024$ & 4.9332e-15 & 3.36\\
		\hline
		\multirow{4}{40pt}{PEIFE-linear33} & $4 \times 2$ & $2048 \times 1024$ & 1.1463e-11 & -\\
		& $4 \times 4$ & $2048 \times 1024$ & 6.0057e-13 & 4.25\\
		& $4 \times 8$ & $2048 \times 1024$ & 4.9785e-14 & 3.59\\
		& $4 \times 16$ & $2048 \times 1024$ & 4.9332e-15 & 3.36\\
		\hline
	\end{tabular}
	\label{tab 4-1}
\end{table}

The overall cost per iteration of PEIFE-linear method is tested with fixed $N\times M=4\times 16,k=4$ and uniform spatial meshes with $N_x \times N_y=256\times 128, 512\times 256, 1024\times 512$ and $2048\times 1024$. Since the analysis of computing cost of PEIFE-linear22, PEIFE-linear23 schemes are same as PEIFE-linear33 scheme, we only test the running time with PEIFE-linear33 scheme on 4 cores. Table \ref{tab4-2} reports the average CPU time costs (seconds) per iteration for the PEIFE-linear33 scheme and correponding growth factors along the refinement of the spatial mesh. The results clearly show that the computational cost grows almost linearly along with the number of mesh nodes, which matches well with the property of FFT and demonstrates the high efficiency of our PEIFE-linear method.

\begin{table}[htbp]
	\centering
	\caption{The average running time costs (seconds) per interation under different spatial meshes and corresponding growth factors with respect to the number of mesh nodes for the PEIFE-linear33 scheme in Example \ref{ex6}}
	\begin{tabular}{|cccc|}
		\hline
		$N_x \times N_y$ & $N \times M$ & Average running time  & Growth factor\\
		& &cost per step& \\
		\hline
		$256 \times 128$ & $4\times 16$ & 22.16 & - \\
		$512 \times 256$ & $4 \times 16$ & 88.65 & 1.00\\
		$1024 \times 512$ & $4 \times 16$ & 336.38 & 0.96\\
		$2048 \times 1024$ & $4 \times 16$ & 1378.28 & 1.02\\
		\hline
	\end{tabular}
	\label{tab4-2}
\end{table}

Furthermore, we also compare the performance of all PEIFE-linear methods and EIFE-linear methods. We compare the running time (seconds) and numerical error in $L^2$ and $L^{\infty}$ norms of various PEIFE-linear and EIFE-linear methods. We run the efficiency tests with fixed $N_x \times N_y=1024\times 512$ and $N_T=N\times M=8\times 8$. As for the testing for PEIFE-linear method, we run the programs in parallel on 8 cores with $k=2$ and the EIFE-linear code sequentially on 1 core. Table \ref{tab4-3} shows again that the accuracy of PEIFE-linear method can reach to that of applying EIFE-linear method sequentially with fine step size, and PEIFE-linear method can achieve significant speedup of roughly 2. Moreover, The $L^2$ and $L^{\infty}$ error declining curves for PEIFE-linear methods are shown in Figure \ref{fig4}. We observe that the error curves decline rapidly and all PEIFE-linear methods achieve convergent within only 1 iteration, which is much smaller than the number of coarse time subintervals (i.e., $N=8$), thus all of them are very efficient due to the fast convergence.

\begin{table}
	\centering
	\caption{The running time (seconds) and numerical error with respect to $L^2$ and $L^{\infty}$ norms for various PEIFE-linear and EIFE-linear methods in Example \ref{ex6}}
	\begin{tabular}{|cccc|}
		\hline
		Method & $L^2$-error & $L^{\infty}$-error & running time\\
		\hline
		EIFE-linear2 & 1.3093e-13 & 3.7007e-13 & 1542.76\\
		EIFE-linear3 & 4.7510e-15 & 1.3680e-14 & 1817.81\\
		PEIFE-linear22 &1.3093e-13 & 3.7007e-13 & 892.90\\
		PEIFE-linear23 & 4.7510e-15 & 1.3680e-14 & 1058.62\\
		PEIFE-linear33 & 4.7510e-15 & 1.3680e-14 & 1107.86\\
		\hline
	\end{tabular}
\label{tab4-3}
\end{table}

\begin{figure}[htbp]
	\centering
	\subfigure{
		\centering
		\includegraphics[width = 170pt,height=150pt]{./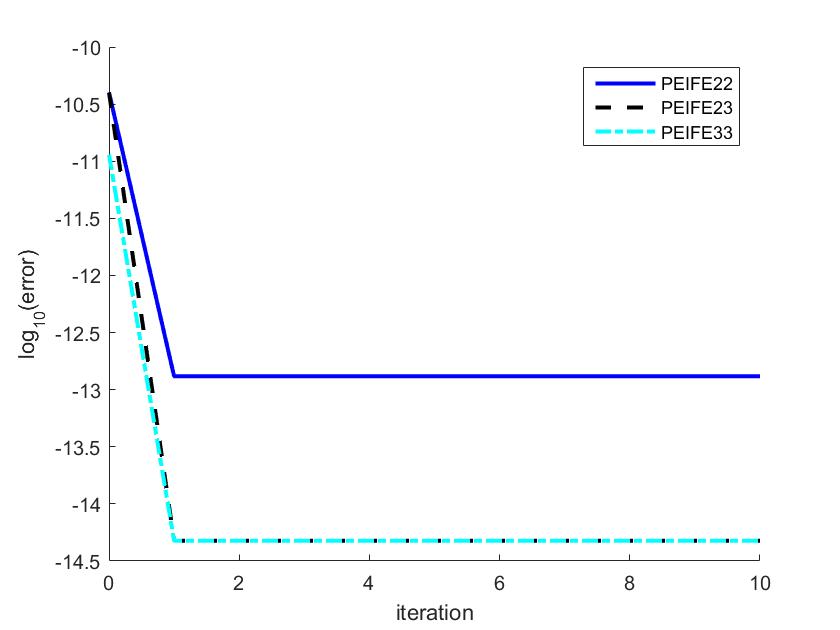}
		\label{fig4-1}
	}
	\subfigure{
		\centering
		\includegraphics[width = 170pt,height=150pt]{./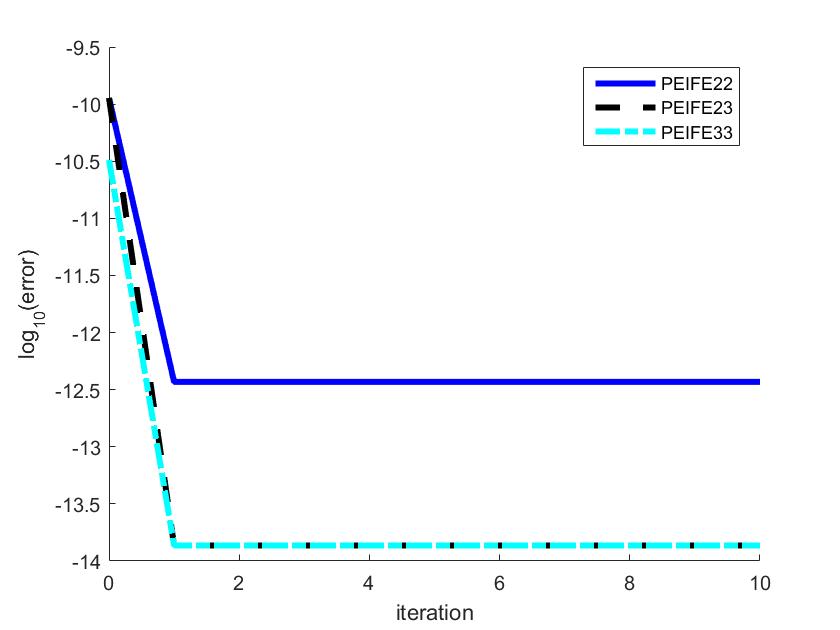}
		\label{fig4-2}
	}
	\caption{The evolutions of $L^2$-error and $L^{\infty}$-error declining curves of the numerical solution produced by PEIFE-linear22, PEIFE-linear23 and PEIFE-linear33 schemes for Example \ref{ex6}.}
	\label{fig4}
\end{figure}

\begin{example}\label{ex1}
In this example, we consider the following three-dimension linear reaction-diffusion problem with homogeneous Dirichlet boundary condition,
\begin{equation*}
\left\{\begin{split}
&u_t= \frac 1 8 \Delta u +2 \pi^2 e^{-4\pi^2 t}\sin(4\pi(x-\frac 1 4))\sin(4\pi (y-\frac 1 8))\sin(4\pi (z-\frac 1 2)),\quad (x, y, z)\in \Omega, \quad 0 \leq t \leq T,\\
&u(0, x, y, z) = \sin(4\pi (x-\frac 1 4))\sin(4\pi (y-\frac 1 8))\sin(4\pi (z-\frac 1 2)), \quad (x,y,z) \in \Omega, \\
\end{split}  \right.
\end{equation*}
where $\Omega=[0,\frac 1 4]\times[\frac 1 8,\frac 3 8]\times [0,\frac 1 4]$ and the terminal time $T=0.4$. The exact solution is given by $u(t, x, y, z) = e^{-4\pi^2 t}\sin(4\pi(x-\frac 1 4))\sin(4\pi (y-\frac 1 8))\sin(4\pi (z-\frac 1 2))$.
\end{example}

On one aspect, for the spatial accuracy tests, we simulate the PEIFE-linear33 scheme with fixed $N_T=N\times M=4\times 128$ and uniformly refined spatial meshes with $N_x \times N_y \times N_z=8 \times 8 \times 8, 16 \times 16 \times 16, 32 \times 32 \times 32$ and $64 \times 64 \times 64$, respectively, so that the temporal step sizes are much finer than spatial mesh sizes. On another aspect, for the temporal accuracy tests, we run EIFE-linear2, PEIFE-linear22 schemes with uniformly refined time step $N_T = N\times M=4\times 4, 4\times 8, 4\times 16, 4 \times 32$ and EIFE-linear3, PEIFE-linear23, PEIFE-linear33 schemes with $N_T = N\times M=4\times 2, 4\times 4, 4\times 8, 4 \times 16$, respectively, fixing with spatial meshes $N_x \times N_y \times N_z=128 \times 128 \times 128$. Similar to the analysis above, we always iterate the prediction-correction procedure for 4 times. All numerical results are presented in Table \ref{tab 4-3}, including the solution errors at time $T$ with respect to $L^2$-norm and corresponding convergence rates. We still observe the roughly second-order spatial convergence in the $L^2$-norm. We can observe the approximate second-order temporal convergence for EIFE-linear2 and PEIFE-linear22 schemes and third-order convergence for EIFE-linear3, PEIFE-linear23 and PEIFE-linear33 schemes in $L^2$-norm, which basically matches the results for Theorem \ref{thm2}.

\begin{table}[htbp]
	\centering
	\caption{Numerical results on the solution errors measured in the $L^2$-norm and corresponding convergence rates for the EIFE-linear2, EIFE-linear3, PEIFE-linear22, PEIFE-linear23 and PEIFE-linear33 schemes in Example \ref{ex1}}
	\begin{tabular}{|ccccc|}
		\hline
		Method & $N_T$ & $N_x \times N_y\times N_z$ & $\big\|U_{h}^{n,(4)}-u(t_n)\big\|_0$ & CR\\
		\hline
		\multicolumn{5}{|c|}{Spatial accuracy tests}\\
		\hline
		\multirow{4}{40pt}{PEIFE-linear33}& $4 \times 128$ & $8 \times 8\times 8$ & 2.3688e-10 & -\\
		& $4 \times 128$ & $16 \times 16 \times 16$ & 6.0567e-11 & 1.97\\
		& $4 \times 128$ & $32 \times 32\times 32$ & 1.5249e-11 & 1.99\\
		& $4 \times 128$ & $64 \times 64\times 64$ & 3.8396e-12 & 1.99\\
		\hline
		\multicolumn{5}{|c|}{Temporal accuracy tests}\\
		\hline
		\multirow{4}{40pt}{EIFE-linear2}& 16& $128 \times 128\times 128$ & 5.0136e-10 & -\\
		&32 & $128 \times 128\times 128$ & 8.9844e-11 & 2.48\\
		&64 & $128 \times 128\times 128$ & 1.9586e-11 & 2.20\\
		&128 & $128 \times 128\times 128$ & 5.2157e-12 & 1.91\\
		\hline
		\multirow{4}{40pt}{EIFE-linear3} & 8 & $128 \times 128\times 128$ & 7.5763e-10 & -\\
		& 16 & $128 \times 128\times 128$ & 4.8986e-11 & 3.95\\
		&32 & $128 \times 128\times 128$ & 3.6365e-12 & 3.75\\
		&64 & $128 \times 128\times 128$ & 5.1925e-13 & 2.81\\
		\hline
		\multirow{4}{40pt}{PEIFE-linear22} & $4\times 4$& $128 \times 128\times 128$ & 5.0136e-10 & -\\
		&$4 \times 8$ & $128 \times 128\times 128$ & 8.9844e-11 & 2.48\\
		&$4\times 16$ & $128 \times 128\times 128$ & 1.9586e-11 & 2.20\\
		&$4\times 32$ & $128 \times 128\times 128$ & 5.2157e-12 & 1.91\\
		\hline
		\multirow{4}{40pt}{PEIFE-linear23} & $4 \times 2$ & $128 \times 128\times 128$ & 7.5763e-10 & -\\
		& $4 \times 4$ &$128 \times 128\times 128$ & 4.8986e-11 & 3.95\\
		&$4\times 8$ & $128 \times 128\times 128$ & 3.6365e-12 & 3.75\\
		&$4\times 16$ & $128 \times 128\times 128$ & 5.1925e-13 & 2.81\\
		\hline
		\multirow{4}{40pt}{PEIFE-linear33} & $4 \times 2$ & $128 \times 128\times 128$ & 7.5763e-10 & -\\
		& $4 \times 4$ &$128 \times 128\times 128$ & 4.8986e-11 & 3.95\\
		&$4\times 8$ & $128 \times 128\times 128$ & 3.6365e-12 & 3.75\\
		&$4\times 16$ & $128 \times 128\times 128$ & 5.1925e-13 & 2.81\\
		\hline
	\end{tabular}
	\label{tab 4-3}
\end{table}

The overall cost per interation of PEIFE-linear method is tested with fixed $N\times M=4\times 16$ and uniform spatial meshes with $N_x \times N_y\times N_z = 16\times 16\times 16, 32 \times 32 \times 32, 64 \times 64 \times 64$ and $128\times 128 \times 128$. Similar to PEIFE-linear33 scheme, the discussion of computing cost of PEIFE-linear22 and PEIFE-linear23 schemes are omitted. We test the running time with PEIFE-linear33 scheme on 4 cores with iterating the prediction-correction procedure for 4 times. Table \ref{tab4-4} shows the average running time (seconds) per iteration for the PEIFE-linear33 scheme and the corresponding growth factors along the refinement of the spatial mesh. The results also show the approximate linear growth of computational cost along with the number of mesh nodes, which coincides with our efficiency analysis of PEIFE-linear method in Section \ref{algorithm-description}.

\begin{table}[htbp]
	\centering
	\caption{The average running time (seconds) per interation under different spatial meshes and corresponding growth factors with respect to the number of mesh nodes for the PEIFE-linear33 scheme in Example \ref{ex1}}
	\begin{tabular}{|cccc|}
		\hline
		$N_x \times N_y\times N_z$ & $N \times M$ & Average running time  & Growth factor\\
		& &cost per step& \\
		\hline
		$16 \times 16 \times 16$ & $4\times 16$ & 11.98 & - \\
		$32 \times 32 \times 32$ & $4 \times 16$ & 65.47 & 0.82\\
		$64 \times 64 \times 64$ & $4 \times 16$ & 346.91 & 0.80\\
		$128 \times 128 \times 128$ & $4 \times 16$ & 2036.72 & 0.85\\
		\hline
	\end{tabular}
	\label{tab4-4}
\end{table}

Moreover, we also compare the performance of all PEIFE-linear methods and EIFE-linear methods. We simulate various PEIFE-linear and EIFE-linear methods and compare the running time (seconds) and numerical error in $L^2$ and $L^{\infty}$ norms. With fixed $N_x\times N_y\times N_z=64\times 64 \times 64$ and $N_T=N\times M=8\times 8$, we iterate the PEIFE-linear method for 2 times in parallel on 8 cores. Table \ref{tab4-5} verifies again that the accuracy of PEIFE-linear method can reach to that of applying EIFE-linear method sequentially with fine step size, but with a speedup of roughly 3. Additionally, the evolutions of the $L^2$ and $L^{\infty}$ errors of various PEIFE-linear methods are shown in Figure \ref{fig5}, which are tested on 8 cores in parallel. The numerical error declines rapidly in the first step and all PEIFE-linear methods can be practically convergent, which is again much smaller than the number of coarse time subintervals (i.e., $N=8$).

\begin{table}
	\centering
	\caption{The running time (seconds) and numerical error with respect to $L^2$ and $L^{\infty}$ norms for various PEIFE-linear and EIFE-linear methods in Example \ref{ex1}}
	\begin{tabular}{|cccc|}
		\hline
		Method & $L^2$-error & $L^{\infty}$-error & running time\\
		\hline
		EIFE-linear2 & 2.2364e-11 & 4.2218e-10 & 24981.61\\
		EIFE-linear3 & 3.3654e-12 & 1.0626e-11 & 26186.52\\
		PEIFE-linear22 & 2.2364e-11 & 4.2218e-10 & 8900.08\\
		PEIFE-linear23 & 3.3654e-12 & 1.0626e-11 & 8635.68\\
		PEIFE-linear33 & 3.3654e-12 & 1.0626e-11 & 9105.40\\
		\hline
	\end{tabular}
	\label{tab4-5}
\end{table}

\begin{figure}[htbp]
	\centering
	\subfigure{
		\centering
		\includegraphics[width = 170pt,height=150pt]{./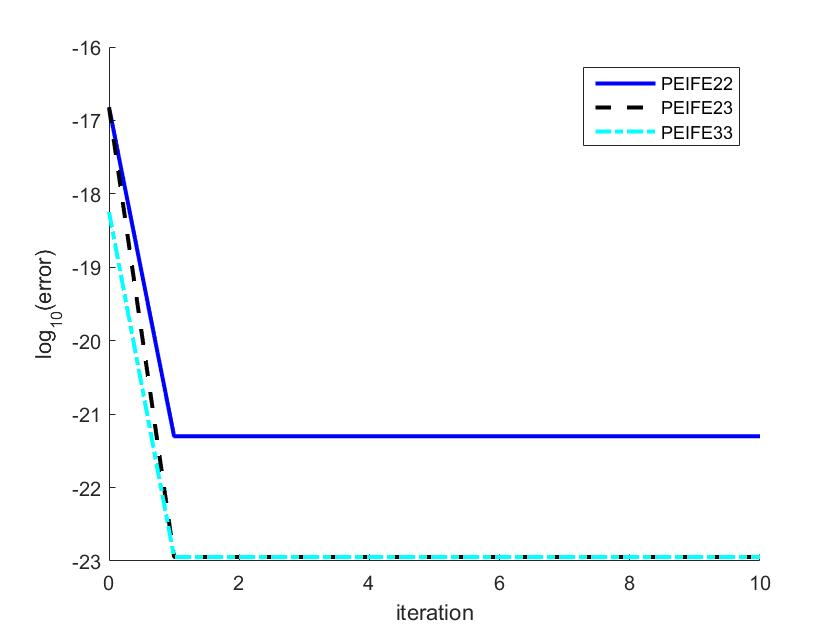}
		\label{fig5-1}
	}
	\subfigure{
		\centering
		\includegraphics[width = 170pt,height=150pt]{./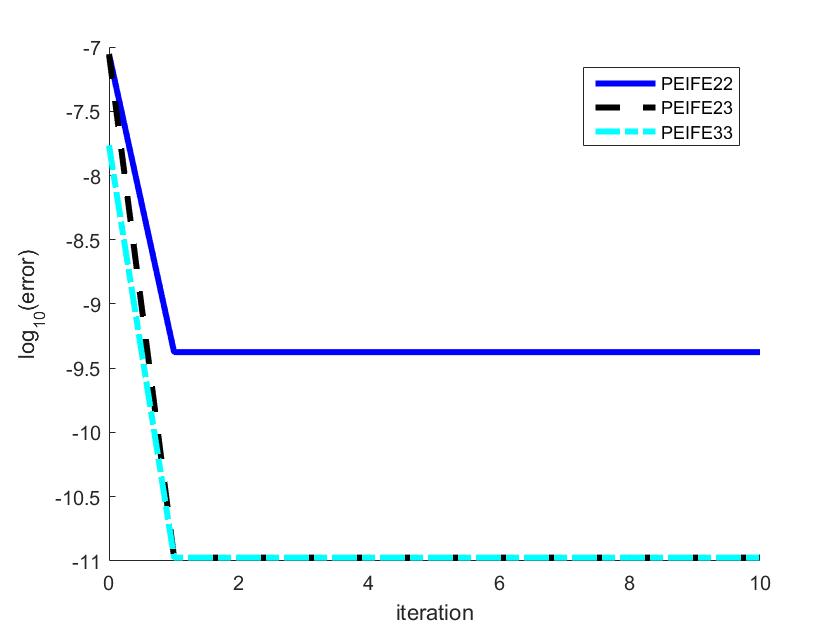}
		\label{fig5-2}
	}
	\caption{The evolutions of $L^2$-error and $L^{\infty}$-error declining curves of the numerical solution produced by PEIFE-linear22, PEIFE-linear23 and PEIFE-linear33 schemes for Example \ref{ex1}.}
	\label{fig5}
\end{figure}

\begin{example}\label{ex7}
	In this example, we consider the following diffusive problem with oscillating source term, which has been studied in \cite{GanderGuttel2013}:
	\begin{equation*}
	\left\{
	\begin{split}
	& u_t = \alpha u_{xx}+g(t, x), \quad x \in [0, 1], \quad 0 \leq t \leq 1,\\
	& u(0, x) = 4x(1-x), \quad x \in [0, 1], \\
	& u(t, 0) = u(t, 1) = 0, \quad 0 \leq t \leq 1,
	\end{split}
	\right.
	\end{equation*}
	where 
	\begin{equation*}
	g(t, x) = h\max\Big\{1-\frac{|c-x|}{w}, 0 \Big\}, \quad c = 0.5+(0.5-w)\sin(2\pi ft).
	\end{equation*}
	The source term $g(t, x)$ is a hat function centered at $c$ with half-width $w = 0.05$ and height $h = 100\alpha^{\frac 1 2}$, oscillating with frequency $f$. 
\end{example}

Fixing the parameter $(\alpha, f)=(0.01, 1),(0.01,10)$ and $(0.1,10)$, we respectively simulate the PEIFE-linear22, PEIFE-linear23 and PEIFE-linear33 schemes to show the numerical results with fixed $N_T=N\times M=4\times 16$ and uniformly refined spatial meshes with $N_x=1024$. The prediction-correction procedure is iterated for 4 times. All numerical results are shown in the Figure \ref{fig8}, which implies that the PEIFE-linear methods for linear equations are also stable for oscillation problems, which is crucial for realistic applications. 

\begin{figure}[htbp]
	\centering
	\subfigure[$\alpha=0.01, f=1$ for PEIFE-linear22 scheme]{
		\centering
		\includegraphics[width = 120pt,height=120pt]{./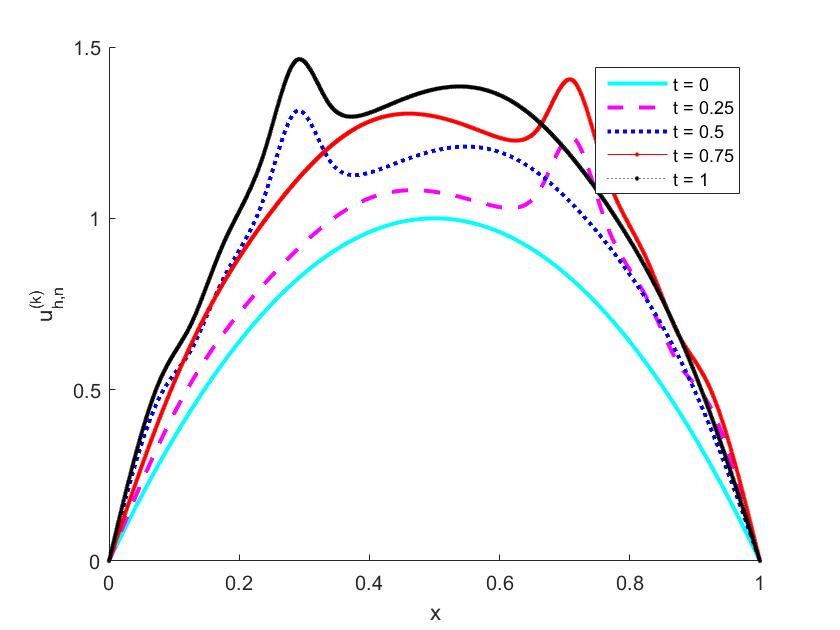}
		\label{fig8-1}
	}
	\subfigure[$\alpha=0.01,f=1$ for PEIFE-linear23 scheme]{
		\centering
		\includegraphics[width = 120pt,height=120pt]{./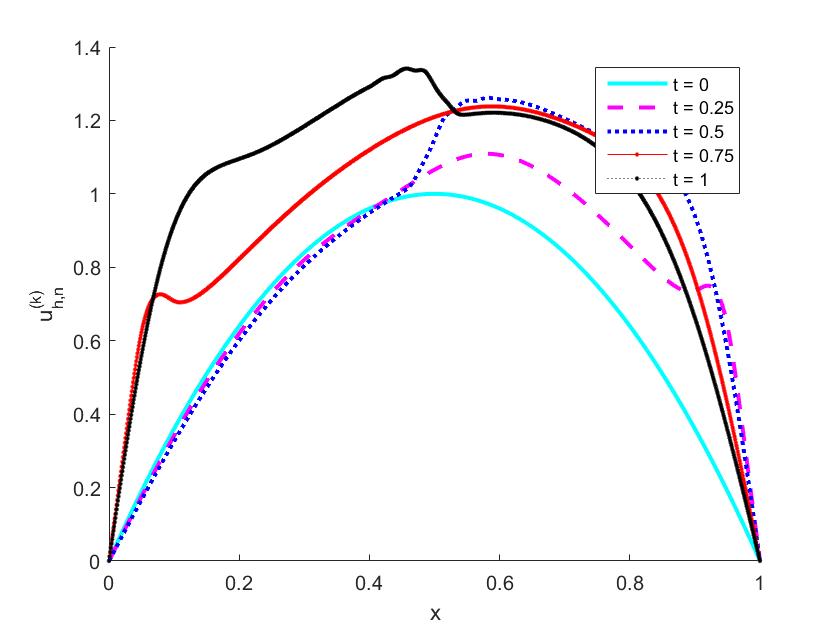}
		\label{fig8-2}
	}
	\subfigure[$\alpha=0.01,f=1$ for PEIFE-linear33 scheme]{
		\centering
		\includegraphics[width = 120pt,height=120pt]{./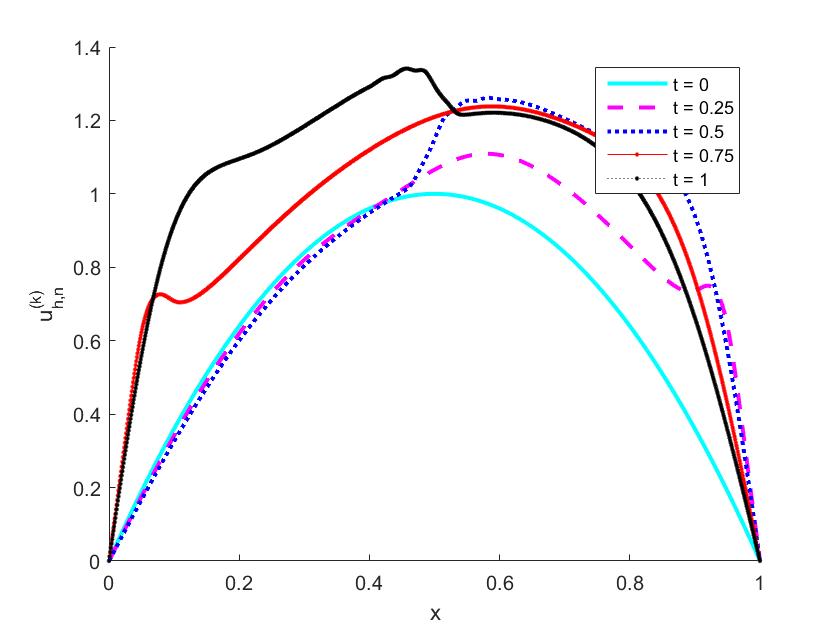}
		\label{fig8-3}
	}

	\subfigure[$\alpha=0.01, f=10$ for PEIFE-linear22 scheme]{
		\centering
		\includegraphics[width = 120pt,height=120pt]{./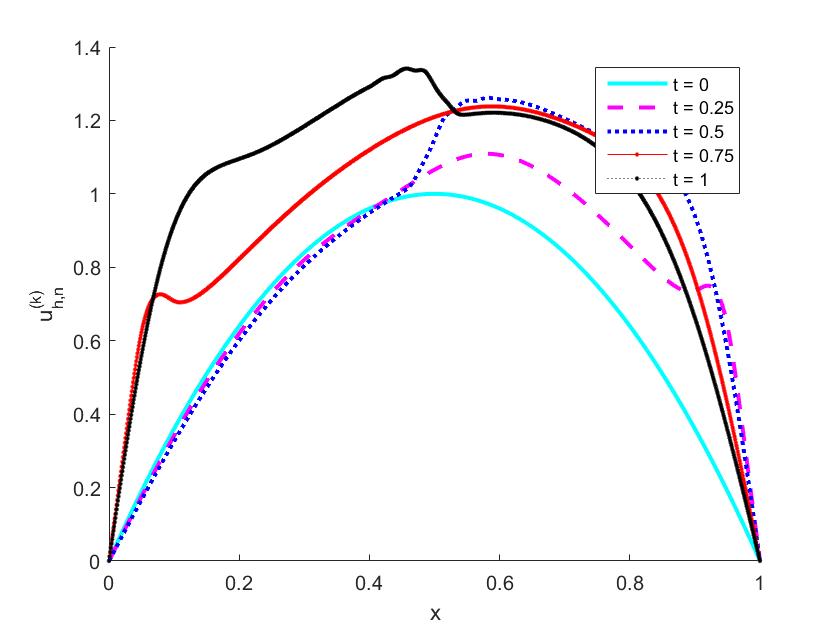}
		\label{fig8-4}
	}
	\subfigure[$\alpha=0.01,f=10$ for PEIFE-linear23 scheme]{
		\centering
		\includegraphics[width = 120pt,height=120pt]{./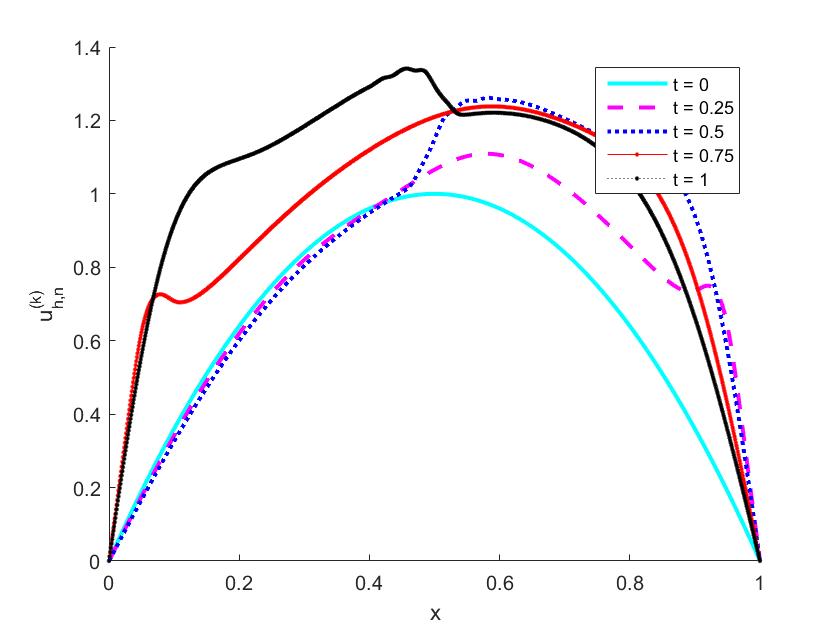}
		\label{fig8-5}
	}
	\subfigure[$\alpha=0.01,f=10$ for PEIFE-linear33 scheme]{
		\centering
		\includegraphics[width = 120pt,height=120pt]{./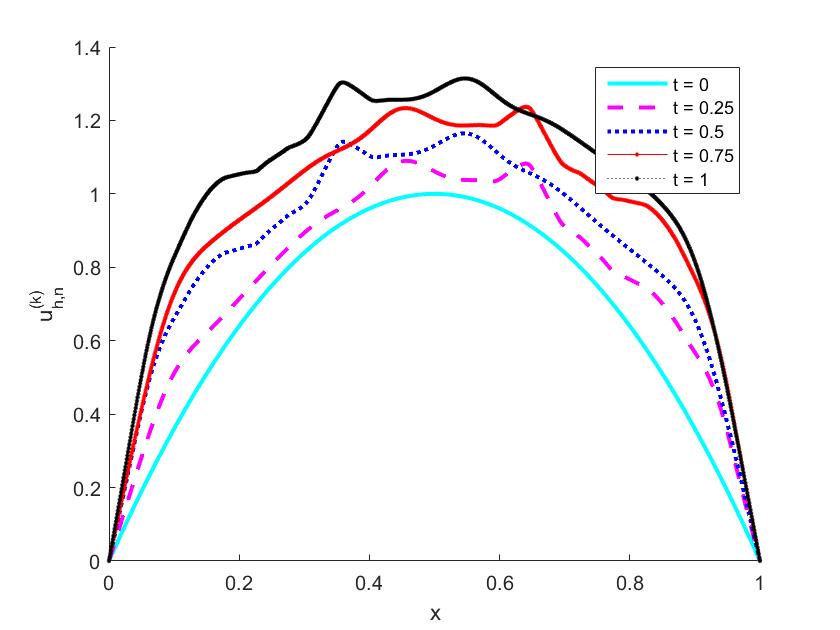}
		\label{fig8-6}
	}

	\subfigure[$\alpha=0.1, f=10$ for PEIFE-linear22 scheme]{
		\centering
		\includegraphics[width = 120pt,height=120pt]{./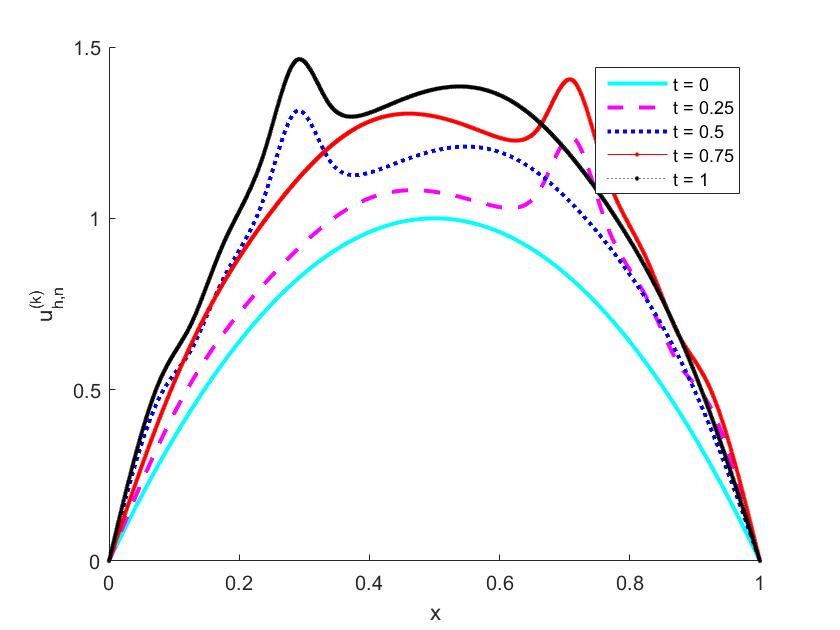}
		\label{fig8-7}
	}
	\subfigure[$\alpha=0.1,f=10$ for PEIFE-linear23 scheme]{
		\centering
		\includegraphics[width = 120pt,height=120pt]{./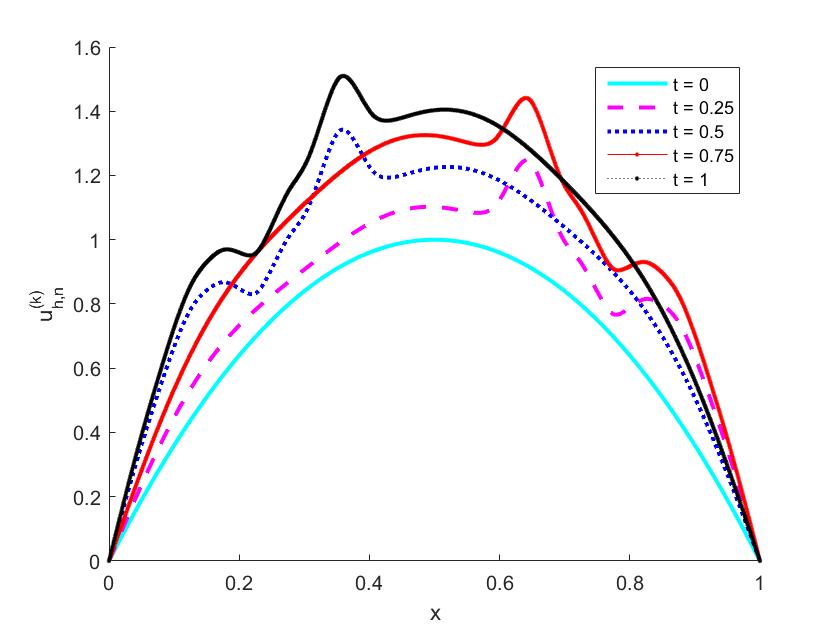}
		\label{fig8-8}
	}
	\subfigure[$\alpha=0.1,f=10$ for PEIFE33 scheme]{
		\centering
		\includegraphics[width = 120pt,height=120pt]{./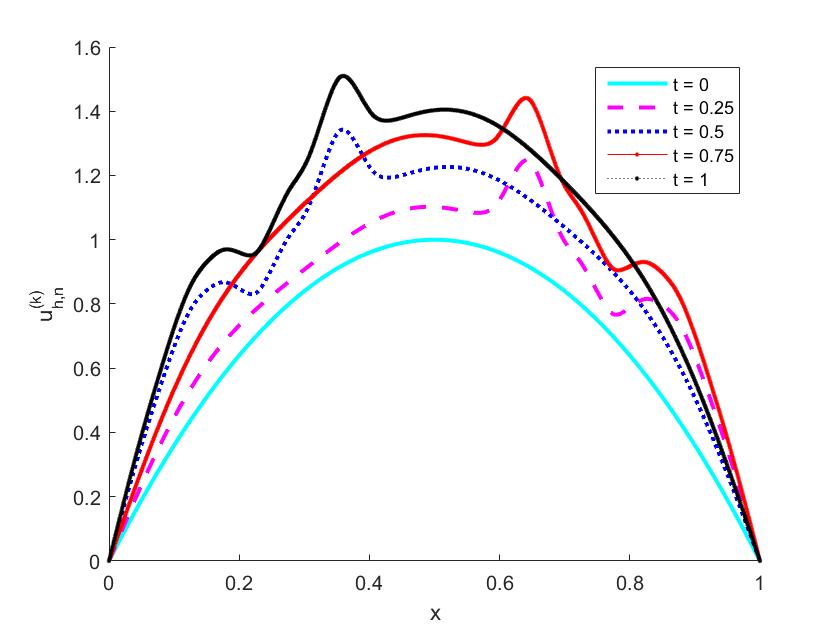}
		\label{fig8-9}
	}
	\caption{Numerical results at $t=0,1.25,0.5,0.75,1$ produced by PEIFE-linear22, PEIFE-linear23, PEIFE-linear33 schemes (from left to right) for Example \ref{ex7} with various $\alpha$ and $f$.}
	\label{fig8}
\end{figure}

\section{Conclusions}\label{conclusion}

For solving a class of linear parabolic equations taking the form \eqref{eq1-1} in regular domains, we have mainly proposed the PEIFE-linear method, which is a modified EIFE-linear method in the parallel-in-time pattern with significant speedup. The proposed PEIFE-linear method is obtained by using finite element method for spatial discretization and then linear exponential Runge-Kutta approximation accompanied with Parareal method for temporal integration. The implementation of PEIFE-linear method allows FFT-based fast calculation techniques and tensor product spectral decomposition. We have derived the explicit error estimates in $L^2$-norm for the PEIFE-linear method after finite iterations. Some numerical examples are also presented to demonstrate the accuracy and high efficiency of the proposed method. Rigorous error analysis of the PEIFE-linear method in $H^1$ or $L^{\infty}$ norm and for the model problem still remains to be explored. 
\bibliographystyle{siam}
\bibliography{ref}

\end{document}